\documentclass[smallextended]{svjour3}       
\smartqed

\usepackage{graphicx}
\usepackage{amsmath}
\usepackage{amsfonts}
\usepackage{amssymb}
\usepackage{cite}
\usepackage{hyperref}
\usepackage{enumerate}

\newtheorem{assumption}[theorem]{Assumption}

\newcommand{\Ocal}{{\mathcal O}}
\newcommand{\Vcal}{{\mathcal V}}
\newcommand{\Ebb}{\mathbb{E}}

\newcommand{\Nbb}{\mathbb{N}}

\newcommand{\Sbb}{\mathbb{S}}

\newcommand{\xbar}{{\bar{x}}}

\newcommand{\Euclid}{\mathbb{E}}
\newcommand{\paren}[1]{\left(#1\right)}
\newcommand{\klam}[1]{\left\{#1\right\}}

\renewcommand{\Re}{{\rm Re}}
\renewcommand{\equiv}{:=}

\newcommand{\ucmin}[2]{\underset{#2}{\mbox{minimize}}~#1}
\newcommand{\set}[2]{\left\{#1\,\left|\,#2\right.\right\}}

\newcommand{\mmap}[3]{#1:\,#2\rightrightarrows #3\,}
\newcommand{\ip}[2]{\left\langle #1, #2 \right\rangle}
\newcommand{\norm}[1]{\left\|#1\right\|}
\renewcommand{\Bbb}{\mathbb{B}}

\DeclareMathOperator{\Id}{Id}
\DeclareMathOperator{\id}{Id}
\DeclareMathOperator{\prox}{prox}
\DeclareMathOperator{\argmin}{argmin\,}
\DeclareMathOperator{\dist}{dist}
\DeclareMathOperator{\cone}{{cone}}

\DeclareMathOperator{\aff}{aff}
\DeclareMathOperator{\Fix}{\mathsf{Fix}\,}

\newcommand{\pncone}[1]{N^{\text{\rm prox}}_{#1}} 
\newcommand{\rpncone}[2]{N^{\text{\rm prox}}_{#1|#2}} 
\newcommand{\reli}{\ensuremath{\operatorname{ri}}}

%
%
%
\begin{document}

\title{A convergent relaxation of the Douglas-Rachford algorithm\thanks{The research leading to these results has received funding from the German-Israeli Foundation Grant G-1253-304.6 and the European Research Council under the European Union's Seventh Framework Programme (FP7/2007-2013)/ERC grant agreement N${}^{\circ}$ 339681.}
}

\author{Nguyen Hieu Thao}


\institute{Nguyen Hieu Thao\at
              Institut f\"ur Numerische und Angewandte Mathematik,
 			  Universit\"at G\"ottingen,
 			  37083 G\"ottingen, Germany \\
 			  Delft Center for Systems and Control,
 			  Delft University of Technology,
 			  2628CD Delft, The Netherlands\\
 	          \email{h.nguyen@math.uni-goettingen.de, h.t.nguyen-3@tudelft.nl}
}

\date{Received: date / Accepted: date}

\maketitle

\begin{abstract}
This paper proposes an algorithm for solving structured optimization problems, which covers both the backward-backward and the Douglas-Rachford algorithms as special cases, and analyzes its convergence.
The set of fixed points of the algorithm is characterized in several cases.
Convergence criteria of the algorithm in terms of general fixed point operators are established.
When applying to nonconvex feasibility including the inconsistent case, we prove local linear convergence results under mild assumptions on regularity of individual sets and of the collection of sets which need not intersect.
In this special case, we refine known linear convergence criteria for the Douglas-Rachford algorithm (DR).
As a consequence, for feasibility with one of the sets being affine, we establish criteria for linear and sublinear convergence of convex combinations of the alternating projection and the DR methods.
These results seem to be new.
We also demonstrate the seemingly improved numerical performance of this algorithm compared to the RAAR algorithm for both consistent and inconsistent sparse feasibility problems.

\keywords{Almost averagedness \and Picard iteration \and projection method \and Krasnoselski-Mann relaxation \and metric subregularity \and transversality}

\subclass{Primary 49J53 \and 65K10
Secondary 49K40 \and 49M05 \and 49M27 \and 65K05 \and 90C26}
\end{abstract}

\section{Introduction}\label{intro}
Convergence analysis has been one of the central and very active applications of variational analysis and mathematical optimization.
Examples of recent contributions to the theory of the field that have initiated efficient programs of analysis are \cite{AspChaLuk16,AttBolRedSou10,BolSabTeb14,LukNguTam16}.
It is the common recipe emphasized in these and many other works that there are two key ingredients required in order to derive convergence of a numerical method 1) regularity of the individual functions or sets such as \textit{convexity} and \textit{averagedness},  and 2) regularity of families of functions or sets at their critical points such as \textit{transversality}, \textit{Kurdyka-\L ojasiewicz property} and \textit{metric subregularity}.
The question of convergence for a given method can therefore be reduced to checking regularity properties of the problem data.
There have been a considerable number of works studying the two ingredients of convergence analysis in order to provide sharper tools in various circumstances, especially in nonconvex cases, e.g., \cite{BauLukPhaWan13b,DruIofLew15,HesLuk13,KhaKruTha15,KruLukNgu16,KruLukNgu17,KruTha16,LewLukMal09,LewMal08,LukNguTam16,NolRon16,Pha16}.

This paper proposes an algorithm called $T_{\lambda}$, which covers both the backward - backward/alternating projection and DR algorithms as special cases of choosing the parameter $\lambda\in[0,1]$, and reports its convergence results.
When applied to feasibility for two sets with one of the sets being affine, $T_{\lambda}$ is a convex combination of the two fundamental projection methods.
On the other hand, $T_{\lambda}$ can be viewed as a relaxation of DR.
Motivation for relaxing the DR algorithm comes from its lack of stability for inconsistent feasibility.
This phenomenon has been observed for the \emph{phase retrieval problem} via the Fourier transform which is essentially inconsistent due to the reciprocal relationship between the spatial and frequency variables of the Fourier transform \cite{Luk05,Luk08}.
To address this issue, a relaxation of DR, often known as RAAR, was introduced and applied to phase retrieval problems by Luke in the aforementioned papers.
In the framework of feasibility, RAAR is described as convex combinations of the basic DR operator and one projector.
Preliminary numerical tests on the performance of $T_{\lambda}$ in comparison with RAAR look promising.
This has motivated the study of convergence analysis of $T_{\lambda}$ in this current work.

After introducing basic notation and proving preliminary results in Section \ref{s:notation and preliminary result}, we introduce $T_{\lambda}$ in terms of fixed point operators, discuss its fixed point set (Proposition \ref{p:Fix_point_oper}), and establish general convergence criteria for $T_{\lambda}$ (Theorem \ref{t:metric subreg convergence}) in Section \ref{s:T lambda as fixed point operator}.
We discuss $T_{\lambda}$ in the framework of feasibility problems in Section \ref{s:application to feasibility}.
The fixed point set of $T_{\lambda}$ is characterized for convex inconsistent feasibility (Proposition \ref{p:fixed point for inconsistent convex}).
For consistent feasibility we prove almost averagedness of $T_{\lambda}$ (Proposition \ref{p: aver_from_reg_for_T_lamb}) and metric subregularity of $T_{\lambda}-\id$ (Lemma \ref{l:assymptotic}) under regular properties of individual sets and of collections of sets, respectively.
The two ingredients are combined to obtain local linear convergence of $T_{\lambda}$ (Theorem \ref{t:linear convergence of T_lambda}).
Section \ref{s:numerical simulation} is devoted to demonstrate the improved numerical performance of $T_{\lambda}$ compared to the RAAR algorithm for both consistent and inconsistent feasibility problems.

\section{Notation and preliminary results}\label{s:notation and preliminary result}

Our notation is standard, c.f. \cite{DonRoc14,Mor06.1,VA}.
The setting throughout this paper is a finite dimensional Euclidean space $\mathbb{E}$.
The norm $\|\cdot\|$ denotes the Euclidean norm.
The open unit ball and the unit sphere in a Euclidean space are denoted $\Bbb$ and $\Sbb$, respectively.
$\Bbb_\delta(x)$ stands for the open ball with radius $\delta>0$ and center $x$.
The distance to a set ${A}\subset\mathbb{E}$ with respect to the bivariate function $\dist(\cdot, \cdot)$ is defined by
\begin{equation*}
\dist(\cdot, {A}) \colon \mathbb{E}\to\mathbb{R}\colon x\mapsto \inf_{y\in{A}}\dist(x,y).
\end{equation*}
We use the convention that the distance to the empty set is $+\infty$.
The set-valued mapping
\begin{equation*}
\mmap{P_{A}}{\mathbb{E}}{\mathbb{E}}\colon
x\mapsto \set{y\in {A}}{\dist(x,y)=\dist(x,{A})}
\end{equation*}
is the \emph{projector} on ${A}$.  An element $y\in P_{A}(x)$ is called a {\em projection}.
This exists for any closed set $ {A}\subset\mathbb{E}$, as can be deduced by the
continuity and the coercivity of the norm.  Note that the projector is not, in general, single-valued,
and indeed uniqueness of the projector defines a type of regularity of the set ${A}$:  local
uniqueness characterizes {\em prox-regularity} \cite{PolRocThi00}  while global uniqueness characterizes convexity \cite{Bunt}.
Closely related to the projector is the {\em prox} mapping corresponding to a function $f$ and a stepsize $\tau>0$ \cite{Moreau62}
\[
 \prox_{\tau,f}(x)\equiv \argmin_{y\in\Euclid}\klam{f(y)+\tfrac{1}{2\tau}\norm{y-x}^2}.
\]
When $f=\iota_{A}$ is the \emph{indicator function} of $A$, that is $\iota_{A}(x)=0$ if $x\in A$ and $\iota_{A}(x)=+\infty$ otherwise,  then $\prox_{\tau,\iota_{A}}=P_{A}$ for all $\tau>0$.
The value function corresponding to the prox mapping is known as the {\em Moreau} envelope, which we denote
by $e_{\tau,f}(x)\equiv \inf_{y\in\Euclid}\klam{f(y)+\tfrac{1}{2\tau}\norm{y-x}^2}$.  When $\tau=1$ and $f=\iota_{A}$
the Moreau envelope is just one-half the squared distance to the set ${A}$:
$e_{1,\iota_{A}}(x) = \tfrac12\dist^2(x,{A})$.
The \emph{inverse projector} $P^{-1}_{A}$ is defined  by
\begin{equation*}
 P^{-1}_{A} (y)\equiv\set{x\in\mathbb{E}}{y\in P_{A}(x)}.
\end{equation*}
The {\em proximal normal cone}  to ${A}$ at
$\xbar$ is the set, which need not be either closed or convex,
\begin{equation}\label{NC2}
\pncone{{A}}(\xbar)\equiv \cone\paren{P_{A}^{-1}\xbar-\xbar}.
\end{equation}
If $\xbar\notin{A} $, then $\pncone{{A}}(\xbar)$ is defined to be empty.
Normal cones are central to characterizations both of the regularity of individual sets and of the regularity of collections of sets.
For a refined numerical analysis of projection methods, one can define the \emph{$\Lambda$-proximal normal cone} to $A$ at $\bar{x}$
\begin{align*}
\rpncone{A}{\Lambda}(\xbar):=\cone\left((P_A^{-1}(\xbar)\cap \Lambda)-\xbar\right).
\end{align*}
When $\Lambda=\mathbb{E}$, it coincides with the proximal normal cone \eqref{NC2}.
We refer the reader to \cite{BauLukPhaWan13b} for a thorough discussion on the restricted versions of various normal cones.

For $\varepsilon\ge 0$ and $\delta>0$, a set $A$ is \emph{$(\varepsilon,\delta)$-regular relative to $\Lambda$} at $\bar{x}\in A$ \cite[Definition 2.9]{HesLuk13} if for all $x\in \mathbb{B}_{\delta}(\bar{x})$, $a\in A\cap\mathbb{B}_{\delta}(\bar{x})$ and $v \in \rpncone{A}{\Lambda}(a)$,
\begin{align*}
\ip{x-a}{v} \le \varepsilon\norm{x-a}\norm{v}.
\end{align*}
When $\Lambda=\Ebb$, the quantifier ``relative to'' is dropped.

For a set-valued operator $T:\mathbb{E}\rightrightarrows \mathbb{E}$, its \emph{fixed point set} is defined by
$\Fix T\equiv \set{x\in \Ebb}{x\in Tx}$.
We denote the {\em $\lambda$-reflector} of $T$ by $R_{T,\lambda}\equiv (1+\lambda)T-\lambda\id$.
A frequently used example in this paper corresponds to $T$ being a projector.

In the context of convergence analysis of Picard iterations, the following generalization of the Fej\'er monotonicity of sequences appears frequently, see, for example, the book \cite{BauCom11} or the paper \cite{LukNguTeb17} for the terminology.
\begin{definition}[linear monotonicity]\label{d:mu_Mon}
The sequence $(x_k)$ is \emph{linearly monotone} with respect to a set $S\subset \mathbb{E}$ with rate $c\in [0,1]$ if
\begin{equation*}
\dist(x_{k+1},S) \leq c\dist(x_k,S)\quad  \forall k\in \Nbb.
\end{equation*}
\end{definition}

Our analysis follows the abstract analysis program proposed in \cite{LukNguTam16} which requires the two key components of the convergence: \emph{almost averagedness} and \emph{metric subregularity}.

\begin{definition}[almost nonexpansive/averaged mappings]\label{d:ane-aa}\cite{LukNguTam16}
Let $T:\mathbb{E} \to \mathbb{E}$ and $U\subset \mathbb{E}$.
\begin{enumerate}[(i)]
\item $T$ is \emph{pointwise almost nonexpansive} on $U$ at $y\in U$ with violation $\varepsilon$ if for all $x\in U$, $x^+\in Tx$, and $y^+\in Ty$,
\begin{align*}
\norm{x^+- y^+} \le \sqrt{1+\varepsilon}\norm{x-y}.
\end{align*}
\item $T$ is \emph{pointwise almost averaged} on $U$ at $y\in U$ with violation $\varepsilon$ and averaging constant $\alpha>0$ if for all $x\in U$, $x^+\in Tx$, and $y^+\in Ty$,
\begin{align}\label{averaged of T}
\norm{x^+- y^+}^2 \le \paren{1+\varepsilon}\norm{x-y}^2 - \frac{1-\alpha}{\alpha}\norm{(x^+-x)-(y^+-y)}^2.
\end{align}
\end{enumerate}
When a property holds on $U$ at every point $z\in U$, we simply say the property holds on $U$.
\end{definition}

From Definition \ref{d:ane-aa}, the almost nonexpansive property is actually the almost averaged property with the same violation and averaging constant $\alpha=1$.

It is worth noting that if the iteration $x_{k+1}\in Tx_{k}$ is linearly monotone with respect to $\Fix T$ and $T$ is almost averaged, then $(x_k)$ converges R-linearly to a fixed point \cite[Proposition 3.5]{LukNguTeb17}.

We next prove a fundamental preliminary result for our analysis regarding almost averaged mappings.

\begin{lemma}\label{l:averaged of reflector}
Let $T:\mathbb{E}\rightrightarrows \mathbb{E}$, $U\subset \mathbb{E}$ and $\lambda\in [0,1]$.
The following statements are equivalent.
\begin{enumerate}[(i)]
  \item\label{l:averaged of reflector 1} $T$ is almost averaged on $U$ with violation $\varepsilon$ and averaging constant $\alpha\le \frac{1}{1+\lambda}$.
  \item\label{l:averaged of reflector 2} The $\lambda$-reflector $R_{T,\lambda}\equiv (1+\lambda)T - \lambda\id$ is almost averaged on $U$ with violation $(1+\lambda)\varepsilon$ and averaging constant
  \begin{equation}\label{alpha'}
      \alpha'\equiv \frac{1}{1+(1+\lambda)\paren{\frac{1-\alpha}{\alpha}-\lambda}}.
   \end{equation}
That is, for all $x,y\in U$, $\tilde{x}\in R_{T,\lambda}x$ and $\tilde{y}\in R_{T,\lambda}y$,
  \begin{align}\label{averaged of reflector}
  \norm{\tilde{x}-\tilde{y}}^2 \le (1+(1+\lambda)\varepsilon)\norm{x-y}^2 - 
  \frac{\frac{1-\alpha}{\alpha}-\lambda}{1+\lambda}\norm{(\tilde{x}-x)-(\tilde{y}-y)}^2.
  \end{align}
\end{enumerate}
\end{lemma}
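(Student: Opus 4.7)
The plan is to reduce the whole equivalence to a single algebraic identity relating the four quantities $\|\tilde x-\tilde y\|^2$, $\|x^+-y^+\|^2$, $\|x-y\|^2$, and $\|(x^+-x)-(y^+-y)\|^2$, where $x^+\in Tx$, $y^+\in Ty$, $\tilde x=(1+\lambda)x^+-\lambda x\in R_{T,\lambda}x$, and $\tilde y=(1+\lambda)y^+-\lambda y\in R_{T,\lambda}y$. Introducing the shorthand $u=x^+-y^+$, $v=x-y$, and $w=u-v=(x^+-x)-(y^+-y)$, one can write $\tilde x-\tilde y=u+\lambda w$, expand $\|u+\lambda w\|^2=\|u\|^2+2\lambda\langle u,w\rangle+\lambda^2\|w\|^2$, and eliminate the inner product via the polarization $2\langle v,w\rangle=\|u\|^2-\|v\|^2-\|w\|^2$ together with $\langle u,w\rangle=\langle v,w\rangle+\|w\|^2$. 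A few lines of bookkeeping then give the identity
\begin{equation}\label{key-id}
\|\tilde x-\tilde y\|^2=(1+\lambda)\|x^+-y^+\|^2-\lambda\|x-y\|^2+\lambda(1+\lambda)\|(x^+-x)-(y^+-y)\|^2,
\end{equation}
accompanied by the elementary relation $(\tilde x-x)-(\tilde y-y)=(1+\lambda)\bigl((x^+-x)-(y^+-y)\bigr)$.

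For the implication \eqref{l:averaged of reflector 1}$\Rightarrow$\eqref{l:averaged of reflector 2}, I would substitute the assumed bound on $\|x^+-y^+\|^2$ from Definition~\ref{d:ane-aa}(ii) into \eqref{key-id}, collect the $\|v\|^2$ coefficients (obtaining $(1+\lambda)(1+\varepsilon)-\lambda=1+(1+\lambda)\varepsilon$), collect the $\|w\|^2$ coefficients (obtaining $-(1+\lambda)\bigl(\tfrac{1-\alpha}{\alpha}-\lambda\bigr)$), and finally convert $\|w\|^2$ into $\tfrac{1}{(1+\lambda)^2}\|(\tilde x-x)-(\tilde y-y)\|^2$ using the auxiliary relation. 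The condition $\alpha\le\tfrac{1}{1+\lambda}$ is precisely what ensures the resulting coefficient $\tfrac{(1-\alpha)/\alpha-\lambda}{1+\lambda}$ is nonnegative, so the bound really qualifies as almost averagedness with violation $(1+\lambda)\varepsilon$ and averaging constant $\alpha'$ given by \eqref{alpha'}, yielding \eqref{averaged of reflector}.

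For the converse \eqref{l:averaged of reflector 2}$\Rightarrow$\eqref{l:averaged of reflector 1}, I would solve \eqref{key-id} for $(1+\lambda)\|x^+-y^+\|^2$, insert the assumed bound \eqref{averaged of reflector} for $\|\tilde x-\tilde y\|^2$, again rewrite $\|(\tilde x-x)-(\tilde y-y)\|^2$ as $(1+\lambda)^2\|(x^+-x)-(y^+-y)\|^2$, and simplify. The $\|v\|^2$ coefficient collapses to $(1+\lambda)(1+\varepsilon)$ and the $\|w\|^2$ coefficient to $-(1+\lambda)\tfrac{1-\alpha}{\alpha}$ (the $\lambda$ in $\tfrac{1-\alpha}{\alpha}-\lambda$ exactly cancels the residual term $-\lambda(1+\lambda)\|w\|^2$ from \eqref{key-id}); dividing through by $1+\lambda$ recovers the averagedness of $T$ with the original violation $\varepsilon$ and constant $\alpha$.

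The main obstacle is really just establishing \eqref{key-id} cleanly; once it is in hand, both implications reduce to collecting coefficients and there is essentially no asymmetry between the two directions. Care is needed, however, to track that the violation and averaging constants transform exactly as claimed, and to confirm the admissible range $\alpha\le\tfrac{1}{1+\lambda}$ arises naturally from the requirement that the averaging coefficient in \eqref{averaged of reflector} be nonnegative.
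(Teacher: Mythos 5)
Your proposal is correct and follows essentially the same route as the paper: the paper's proof rests on exactly the identity \eqref{key-id} (obtained there by citing a known expansion for affine combinations rather than by the polarization computation you sketch) and then performs the same coefficient bookkeeping in both directions, using $(\tilde x-x)-(\tilde y-y)=(1+\lambda)\bigl((x^+-x)-(y^+-y)\bigr)$ to pass between the two forms of the residual term. The only cosmetic difference is that you derive the identity from scratch while the paper invokes \cite[Corollary 2.14]{BauCom11}.
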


\begin{proof} Let $x,y\in U$, $x^+\in Tx$, $y^+\in Ty$, $\tilde{x} = (1+\lambda)x^+-\lambda x \in R_{T,\lambda}x$ and $\tilde{y} = (1+\lambda)y^+-\lambda y \in R_{T,\lambda}y$.
We have by definition of $R_{T,\lambda}$ and \cite[Corollary 2.14]{BauCom11} that
\begin{align}\notag
        &\norm{\tilde{x}-\tilde{y}}^2
        \\\nonumber
        =\; &\norm{(1+\lambda)(x^+-y^+) -\lambda(x-y)}^2
        \\\label{axpand reflector}
        =\; &(1+\lambda)\norm{x^+-y^+}^2 - \lambda\norm{x-y}^2 + \lambda(1+\lambda)\norm{(x^+-y^+)-(x-y)}^2.
\end{align}
Substituting \eqref{averaged of T} into \eqref{axpand reflector} and noting that
\[
\norm{(\tilde{x}-x)-(\tilde{y}-y)} = (1+\lambda)\norm{(x^+-x)-(y^+-y)},
\]
we obtain
\begin{align*}
        &\norm{\tilde{x}-\tilde{y}}^2
        \\
        \le\;& (1+\varepsilon(1+\lambda))\norm{x-y}^2 - (1+\lambda)\paren{\frac{1-\alpha}{\alpha}-\lambda}\norm{(x^+-x)-(y^+-y)}^2
        \\
    =\; &(1+\varepsilon(1+\lambda))\norm{x-y}^2 - \frac{\frac{1-\alpha}{\alpha}-\lambda}{1+\lambda}\norm{(\tilde{x}-x)-(\tilde{y}-y)}^2,
\end{align*}
which is exactly \eqref{averaged of reflector}.

Conversely, substituting \eqref{averaged of reflector} into \eqref{axpand reflector}, we obtain
\begin{align*}
         & (1+\lambda)\norm{x^+-y^+}^2 - \lambda\norm{x-y}^2 + \lambda(1+\lambda)\norm{(x^+-y^+)-(x-y)}^2
        \\
         \le\;& (1+(1+\lambda)\varepsilon)\norm{x-y}^2 - \frac{\frac{1-\alpha}{\alpha}-\lambda}{1+\lambda}\norm{(\tilde{x}-x)-(\tilde{y}-y)}^2
         \\
        =\;& (1+(1+\lambda)\varepsilon)\norm{x-y}^2 - \paren{1+\lambda}\paren{\frac{1-\alpha}{\alpha}-\lambda}\norm{(x^+-x)-(y^+-y)}^2,
\end{align*}
which is equivalent to \eqref{averaged of T}. The proof is complete.
\hfill$\square$
\end{proof}

Lemma \ref{l:averaged of reflector} generalizes \cite[Lemma 2.4]{HesLuk13} where the result was proved for $\alpha=1/2$ and $\lambda=1$.
\bigskip

The next lemma recalls facts regarding the almost averagedness of projectors and reflectors associated with regular sets.

\begin{lemma}\label{l:lemma_aver_from_reg}
Let $A\subset \mathbb{E}$ be $(\varepsilon,\delta)$-regular at $\bar{x}\in A$ and define
\[
U\equiv\{x\in \mathbb{E}\mid P_Ax\subset \mathbb{B}_{\delta}(\bar{x})\}.
\]
\begin{enumerate}[(i)]
\item\label{l:lemma_aver_from_reg_1} The projector $P_A$ is pointwise almost nonexpansive on $U$ at every point $z\in A\cap \mathbb{B}_{\delta}(\bar{x})$ with violation $2\varepsilon+\varepsilon^2$.
\item\label{l:lemma_aver_from_reg_2} The projector $P_A$ is pointwise almost averaged on $U$ at every point $z\in A\cap \mathbb{B}_{\delta}(\bar{x})$ with violation $2\varepsilon+2\varepsilon^2$ and averaging constant $1/2$.
\item\label{l:lemma_aver_from_reg_3}
The $\lambda$-reflector $R_{P_A,\lambda}$ is pointwise almost averaged on $U$ at every point $z\in A\cap \mathbb{B}_{\delta}(\bar{x})$ with violation $(1+\lambda)(2\varepsilon+2\varepsilon^2)$ and averaging constant $\frac{1+\lambda}{2}$, that is, for all $x\in U$, $x^+\in R_{P_A,\lambda}x$ and $z\in A\cap \mathbb{B}_{\delta}(\bar{x})$,
\begin{align*}
\norm{x^+-{z}}^2 \le \paren{1+(1+\lambda)(2\varepsilon+2\varepsilon^2)}\norm{x-z}^2 - \frac{1-\lambda}{1+\lambda}\norm{x^+-x}^2.
\end{align*}
\end{enumerate}
\end{lemma}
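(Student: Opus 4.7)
The plan is to establish (i), (ii), (iii) in that order, each step feeding into the next, with (iii) being an immediate application of Lemma \ref{l:averaged of reflector} to (ii). The common starting point is the algebraic identity
\[
\norm{x^+-z}^2 = \norm{x-z}^2 - \norm{x-x^+}^2 + 2\ip{x-x^+}{z-x^+}
\]
combined with the regularity inequality.

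For part (i), I would fix $x\in U$, $x^+\in P_Ax$ and $z\in A\cap\Bbb_\delta(\xbar)$. The definition of $U$ forces $x^+\in A\cap\Bbb_\delta(\xbar)$, and since $x^+\in P_Ax$ we have $x-x^+\in\pncone{A}(x^+)$. The $(\varepsilon,\delta)$-regularity at $\xbar$ therefore yields $\ip{z-x^+}{x-x^+}\le\varepsilon\norm{z-x^+}\norm{x-x^+}$. Substituted into the identity, this gives a quadratic inequality in $\norm{x^+-z}$. The variational property $\norm{x-x^+}=\dist(x,A)\le\norm{x-z}$ then bounds the cross-term, and solving the quadratic produces $\norm{x^+-z}\le(1+\varepsilon)\norm{x-z}$, i.e.\ violation $(1+\varepsilon)^2-1=2\varepsilon+\varepsilon^2$.

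For part (ii), I would recycle the same expansion but estimate the cross term differently. Rather than using a symmetric Young inequality (which would give violation $\varepsilon+\varepsilon^2$ but only leave a coefficient $-(1-\varepsilon-\varepsilon^2)$ in front of $\norm{x-x^+}^2$), I would feed the bound $\norm{z-x^+}\le(1+\varepsilon)\norm{x-z}$ from (i) together with $\norm{x-x^+}\le\norm{x-z}$ into the cross term to obtain $2\varepsilon\norm{z-x^+}\norm{x-x^+}\le2\varepsilon(1+\varepsilon)\norm{x-z}^2$. The identity then collapses to
\[
\norm{x^+-z}^2\le\paren{1+2\varepsilon+2\varepsilon^2}\norm{x-z}^2-\norm{x-x^+}^2.
\]
Since $z\in A$ is a fixed point of $P_A$ (so $z^+=z$), this is exactly pointwise almost averagedness of $P_A$ at $z$ with violation $2\varepsilon+2\varepsilon^2$ and averaging constant $\alpha=1/2$, as claimed.

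For part (iii), I would apply Lemma \ref{l:averaged of reflector} to $T=P_A$ with $\alpha=1/2$ and the given $\lambda\in[0,1]$; the hypothesis $\alpha\le1/(1+\lambda)$ is automatic. Because $z\in A$ is fixed by $P_A$, it is also fixed by $R_{P_A,\lambda}=(1+\lambda)P_A-\lambda\id$, so setting $y=z$ in \eqref{averaged of reflector} kills the term $(\tilde y-y)$. The violation becomes $(1+\lambda)(2\varepsilon+2\varepsilon^2)$ and the coefficient $\frac{(1-\alpha)/\alpha-\lambda}{1+\lambda}$ evaluates to $\frac{1-\lambda}{1+\lambda}$, matching the claimed averaging constant $\alpha'=(1+\lambda)/2$ through $\frac{1-\alpha'}{\alpha'}=\frac{1-\lambda}{1+\lambda}$. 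The main obstacle is essentially algebraic bookkeeping: resolving the quadratic inequality cleanly in (i), and choosing the correct bound on the cross term in (ii) so that the residual $-\norm{x-x^+}^2$ has coefficient exactly $1$ (giving $\alpha=1/2$) rather than something strictly smaller.
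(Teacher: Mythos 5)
Your proposal is correct and, for part (iii), coincides exactly with the paper's argument (apply Lemma \ref{l:averaged of reflector} to $T=P_A$ with $\alpha=1/2$ and note that $z\in A$ is fixed by $R_{P_A,\lambda}$); for parts (i)--(ii) the paper simply cites \cite[Theorem 2.14]{HesLuk13}, and the expansion-plus-regularity computation you supply is essentially the standard argument behind that citation. The only step worth making explicit is that resolving the quadratic in (i), i.e.\ $\sqrt{\varepsilon^2\norm{x-x^+}^2+\norm{x-z}^2-\norm{x-x^+}^2}\le\norm{x-z}$, uses $\varepsilon\le 1$; this is harmless because $(\varepsilon,\delta)$-regularity with $\varepsilon\ge 1$ holds vacuously by Cauchy--Schwarz and the claimed violation is monotone in $\varepsilon$.
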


\begin{proof}
Statements \eqref{l:lemma_aver_from_reg_1} and \eqref{l:lemma_aver_from_reg_2} were proved in \cite[Theorem 2.14]{HesLuk13}.
Statement \eqref{l:lemma_aver_from_reg_3} follows from \eqref{l:lemma_aver_from_reg_2} and Lemma \ref{l:averaged of reflector} applied to $T=P_A$ and $\alpha=1/2$.
\hfill$\square$
\end{proof}

The following concept of \emph{metric subregularity with functional modulus} has played a central role, implicitly and explicitly, in the analysis of convergence of Picard iterations \cite{AspChaLuk16,HesLuk13,LukNguTam16,LukNguTeb17}.
Recall that a function $\mu:[0,\infty) \to [0,\infty)$ is a \textit{gauge function} if $\mu$ is continuous and strictly increasing
with $\mu(0)=0$ and $\lim_{t\to \infty}\mu(t)=\infty$.

\begin{definition}[metric subregularity with functional modulus]\label{d:(str)metric (sub)reg}
A mapping $\mmap{F}{\mathbb{E}}{\mathbb{E}}$ is \emph{metrically subregular with gauge $\mu$ on $U\subset\mathbb{E}$ for $y$ relative to $\Lambda\subset \mathbb{E}$} if
\begin{equation}\label{e:metricregularity}
\mu\paren{\dist\paren{x,{F}^{-1}(y)\cap \Lambda}}\leq \dist\paren{y,{F}(x)}\quad \forall x\in U\cap \Lambda.
\end{equation}
When $\mu$ is a linear function, that is $\mu(t)=\kappa t,\, \forall t\in [0,\infty)$, one says ``with constant $\kappa$'' instead of
``with gauge $\mu=\kappa\id$''.
When $\Lambda={X}$, the quantifier ``relative to'' is dropped.
\end{definition}

Metric subregularity has many important applications in variational analysis and mathematical optimization, see the monographs and papers \cite{DonRoc14,Iof00,Iof11,Iof13,Iof16,KlaKum02,KlaKum09,Kru15,Pen13,Mor06.1}.
For the discussion of metric subregularity in connection with subtransversality of collections of sets, we refer the reader to \cite{Kru06,Kru09,KruTha14,KruTha15}.

The next theorem serves as the basic template for the quantitative convergence analysis of fixed point iterations.
By the notation $\mmap{T}{\Lambda}{\Lambda}$ where $\Lambda$ is a subset of $\mathbb{E}$, we mean that $\mmap{T}{\mathbb{E}}{\mathbb{E}}$ and $Tx\subset \Lambda$ for all $x\in \Lambda$.
This simplification of notation should not lead to any confusion if one keeps in mind
that there may exist fixed points of $T$ that are not in $\Lambda$.
For the importance of the use of $\Lambda$ in isolating the desirable fixed point, we refer the
reader to \cite[Example 1.8]{AspChaLuk16}.
In the following, $\reli \Lambda$ denotes the \emph{relative interior} of $\Lambda$.

\begin{theorem}\label{t:Tconv}\cite[Theorem 2.1]{LukNguTam16}
   Let $\mmap{T}{\Lambda}{\Lambda}$ for $\Lambda\subset\mathbb{E}$ and let $S\subset\reli \Lambda$ be closed and nonempty with
   $Ty\subset\Fix T\cap S$ for all
   $y\in S$.
   Let $\Ocal$ be a neighborhood of $S$ such that
   $\Ocal\cap \Lambda\subset \reli \Lambda$.
   Suppose that
\begin{enumerate}[(a)]
\item\label{t:Tconv a} T is pointwise almost averaged at all points $y\in S$
   with violation $\varepsilon$ and averaging constant $\alpha\in (0,1)$ on $\Ocal\cap \Lambda$, and
\item\label{t:Tconv b} there exists a neighborhood $\Vcal$ of $\Fix T\cap S$ and a constant $\kappa>0$
such that for all $y\in S$, $y^+\in Ty$ and all $x^+\in Tx$  the estimate
\begin{equation}\label{e:doughnut mreg}
\kappa\dist(x,S)\leq \norm{\paren{x-x^+}-\paren{y-y^+}}
\end{equation}
holds whenever  $x\in \paren{\Ocal\cap \Lambda}\setminus \paren{\Vcal\cap \Lambda}$.
\end{enumerate}
Then for all $x^+\in Tx$
\begin{equation*}
   \dist\paren{x^+,\Fix T\cap S}\leq\sqrt{1+\varepsilon - \frac{(1-\alpha)\kappa^2}{\alpha}}\dist(x,S)
\end{equation*}
whenever  $x\in \paren{\Ocal\cap \Lambda}\setminus \paren{\Vcal\cap \Lambda}$.

In particular, if $\kappa>\sqrt{\frac{\varepsilon\alpha}{1-\alpha}}$,  then
for any initial point $x_0\in \Ocal\cap \Lambda$  the iteration
$x_{k+1}\in Tx_k$ satisfies
\begin{equation*}
   \dist\paren{x_{k+1},\Fix T\cap S}\leq c^k \dist(x_0,S)
\end{equation*}
with $c\equiv \sqrt{1+\varepsilon-\frac{(1-\alpha)\kappa^2}{\alpha}}<1$
for all $k$ such that $x_j\in \paren{\Ocal\cap \Lambda}\setminus \paren{\Vcal\cap \Lambda}$ for $j=1,2,\dots,k$.
\end{theorem}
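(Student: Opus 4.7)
The plan is to use the two hypotheses simultaneously at each point $x\in(\Ocal\cap\Lambda)\setminus(\Vcal\cap\Lambda)$. First I would pick a nearest point $y\in S$, which exists because $S$ is closed and nonempty, so that $\norm{x-y}=\dist(x,S)$. Since by assumption $Ty\subset\Fix T\cap S$, any $y^+\in Ty$ automatically lies in $\Fix T\cap S$, and consequently for any $x^+\in Tx$ one has the bound $\dist(x^+,\Fix T\cap S)\le\norm{x^+-y^+}$. This observation is what links the two hypotheses to the quantity we ultimately want to control.

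Next, invoking the pointwise almost averagedness (a) at this specific $y$ yields
\begin{equation*}
\norm{x^+-y^+}^2\le(1+\varepsilon)\norm{x-y}^2-\frac{1-\alpha}{\alpha}\norm{(x^+-x)-(y^+-y)}^2,
\end{equation*}
while the metric subregularity estimate \eqref{e:doughnut mreg}, valid precisely because $x$ is in the doughnut, gives $\norm{(x^+-x)-(y^+-y)}\ge\kappa\,\dist(x,S)$. Substituting this lower bound into the averagedness inequality, and using $\norm{x-y}=\dist(x,S)$ together with $\dist(x^+,\Fix T\cap S)\le\norm{x^+-y^+}$, produces
\begin{equation*}
\dist(x^+,\Fix T\cap S)^2\le\paren{1+\varepsilon-\frac{(1-\alpha)\kappa^2}{\alpha}}\dist(x,S)^2,
\end{equation*}
which is exactly the first conclusion.

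For the iterative statement, set $c\equiv\sqrt{1+\varepsilon-(1-\alpha)\kappa^2/\alpha}$; the assumption $\kappa>\sqrt{\varepsilon\alpha/(1-\alpha)}$ is precisely what forces $c<1$. Chaining the per-step inequality above with the trivial bound $\dist(x_k,S)\le\dist(x_k,\Fix T\cap S)$, valid because $\Fix T\cap S\subset S$, delivers $\dist(x_{k+1},\Fix T\cap S)\le c\,\dist(x_k,\Fix T\cap S)$, so unwinding the recursion back to $x_0$ yields the geometric rate, as long as every iterate $x_j$ lies in $(\Ocal\cap\Lambda)\setminus(\Vcal\cap\Lambda)$. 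The monotone decrease of $\dist(x_k,S)$ together with $T\Lambda\subset\Lambda$ and the fact that $\Ocal$ is a neighborhood of $S$ is what prevents the iterates from escaping $\Ocal\cap\Lambda$ before they (possibly) enter $\Vcal\cap\Lambda$.

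The step I expect to be most delicate is not the algebra but the bookkeeping: one must choose $y\in S$, $y^+\in Ty$, and $x^+\in Tx$ so that the very same triple witnesses both (a) and (b). The quantifier structure of (b), which asks the inequality to hold for all $y\in S$, all $y^+\in Ty$, and all $x^+\in Tx$ simultaneously, is exactly what makes the two hypotheses compatible and lets the argument go through cleanly in the set-valued setting.
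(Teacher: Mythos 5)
Your argument is correct and is essentially the standard proof of this result (the paper itself only cites \cite[Theorem 2.1]{LukNguTam16} without reproving it): take $y\in P_Sx$, note $y^+\in Ty\subset\Fix T\cap S$ so that $\dist(x^+,\Fix T\cap S)\le\norm{x^+-y^+}$, and combine the almost-averagedness inequality at $y$ with the lower bound \eqref{e:doughnut mreg} on the transport discrepancy, then chain using $\dist(x_k,S)\le\dist(x_k,\Fix T\cap S)$. The only caveat is your closing remark about the iterates being "prevented from escaping'' $\Ocal\cap\Lambda$: the theorem does not assert this (the conclusion is conditional on $x_j$ remaining in the annular region), and monotone decrease of $\dist(x_k,S)$ alone would not prove it, so that sentence should be dropped rather than relied upon.
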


\begin{remark}\label{rem_metric_sub}
In the case of $S=\Fix T$ condition \eqref{e:doughnut mreg} reduces to metric subregularity of the mapping ${F}\equiv T-\Id$ for $0$ on the annular set, that is
\begin{equation*}
\kappa\dist(x,{F}^{-1}(0))\leq \dist(0,{F}(x))\quad \forall x\in \paren{\Ocal\cap \Lambda}\setminus \paren{\Vcal\cap \Lambda}.
\end{equation*}
The inequality $\kappa>\sqrt{\frac{\varepsilon\alpha}{1-\alpha}}$ then states that the constant of metric subregularity $\kappa$
is sufficiently large relative to the violation of the averaging property to guarantee a linear progression of the iterates through that annular region.
\end{remark}

For a comprehensive discussion on the roles of $S$ and $\Lambda$ in the analysis program of Theorem \ref{t:Tconv}, we would like to refer the reader to the paper \cite{LukNguTam16}.

For the sake of simplification in terms of presentation, we have chosen to reduce the number of technical constants appearing in the analysis.
It would be obviously analogous to formulate more theoretically general results by using more technical constants in appropriate places.

\section{$T_{\lambda}$ as a fixed point operator}\label{s:T lambda as fixed point operator}

We consider the problem of finding a fixed point of the operator
\begin{equation}\label{def T lambda}
T_{\lambda} \equiv T_1\paren{(1+\lambda)T_2 - \lambda\id} - \lambda\paren{T_2-\id},
\end{equation}
where $\lambda \in [0,1]$ and $T_i:\mathbb{E} \rightrightarrows \mathbb{E}$ $(i=1,2)$ are assumed to be easily computed.

Examples of $T_{\lambda}$ include the backward-backward and the DR algorithms \cite{BorTam15,ComPes11,LiPon16,Luk08,PatSteBem14} for solving the structured optimization problem
\begin{equation*}
   \ucmin{f_1(x)+f_2(x)}{x\in\mathbb{E}}
\end{equation*}
under different assumptions on the functions $f_i$ $(i=1,2)$.
Indeed, when $T_i$ are the prox mappings of $f_i$ with parameters $\tau_i>0$, then $T_{\lambda}$ with $\lambda=0$ and $1$ takes the form
$
T_{\lambda} = \prox_{\tau_1,f_1}\circ \prox_{\tau_2,f_2},
$
and
$
T_{\lambda} = \prox_{\tau_1,f_1}\paren{2\prox_{\tau_2,f_2}-\id} - \prox_{\tau_2,f_2} +\id
$,
respectively.
\bigskip

We first describe the fixed point set of $T_{\lambda}$ via those of the constituent operators $T_i$ $(i=1,2)$.

\begin{proposition}\label{p:Fix_point_oper} Let $T_1,T_2: \mathbb{E}\to \mathbb{E}$, $\lambda\in [0,1]$ and consider
\begin{equation}\label{RDR_general}
T_{\lambda} = T_1\left((1+\lambda)T_2-\lambda\id\right)-\lambda(T_2-\id).
\end{equation}
The following statements hold true.
\begin{enumerate}[(i)]
\item\label{p:Fix_point_oper_1} $(1+\lambda)T_{\lambda}-\lambda\id=\left((1+\lambda)T_1-\lambda\id\right)
\circ\left((1+\lambda)T_2-\lambda\id\right)$. As a consequence,
$$
\Fix T_{\lambda}=\Fix \left((1+\lambda)T_1-\lambda\id\right)
\circ\left((1+\lambda)T_2-\lambda\id\right).
$$
\item\label{p:Fix_point_oper_2} Suppose that $T_1=P_A$ is the projector associated with an affine set $A$ and $T_2$ is single-valued. Then
\begin{align*}
\Fix T_{\lambda}&=\{x\in \mathbb{E} \mid P_Ax =\lambda T_2x+(1-\lambda)x\}
\\
& \subset \{x\in \mathbb{E} \mid P_Ax = P_AT_2x\}.
\end{align*}
\end{enumerate}
\end{proposition}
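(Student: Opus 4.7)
Part (i) is purely algebraic, so I would begin by expanding both sides and checking they agree. Pick $x \in \mathbb{E}$ together with $u \in T_2 x$ and $v \in T_1((1+\lambda)u - \lambda x)$. Then $(1+\lambda)v - \lambda((1+\lambda)u - \lambda x)$ is a generic element of $((1+\lambda)T_1 - \lambda\id) \circ ((1+\lambda)T_2 - \lambda\id) x$, while $(1+\lambda)(v - \lambda(u-x)) - \lambda x$ is the corresponding element of $(1+\lambda)T_\lambda x - \lambda x$; both simplify to $(1+\lambda)v - \lambda(1+\lambda)u + \lambda^2 x$. For the fixed point consequence, I would then observe that for any set-valued mapping $T$ and any $\lambda > -1$ one has $x \in Tx$ if and only if $x \in (1+\lambda)Tx - \lambda x$, so $\Fix T_\lambda = \Fix((1+\lambda)T_\lambda - \lambda\id)$ and the composition identity converts this into the claimed form.

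For part (ii), the workhorse is the fact that when $A$ is affine the projector $P_A$ preserves affine combinations: $P_A\bigl(\sum_i \alpha_i y_i\bigr) = \sum_i \alpha_i P_A y_i$ whenever $\sum_i \alpha_i = 1$. Starting from the fixed point equation $x = P_A((1+\lambda)T_2 x - \lambda x) - \lambda(T_2 x - x)$ and expanding the projection by affineness gives, after rearrangement,
\begin{equation*}
(1-\lambda)x + \lambda T_2 x = (1+\lambda) P_A T_2 x - \lambda P_A x.
\end{equation*}
The right-hand side is an affine combination (the coefficients $1+\lambda$ and $-\lambda$ sum to $1$) of points of $A$, so it belongs to $A$; applying $P_A$ to both sides and invoking affineness a second time yields the companion identity
\begin{equation*}
(1-\lambda)x + \lambda T_2 x = (1-\lambda) P_A x + \lambda P_A T_2 x.
\end{equation*}
Subtracting the two equations cancels the left-hand side and delivers $P_A x = P_A T_2 x$, which already gives the advertised inclusion; plugging this back in turns either equation into $P_A x = (1-\lambda) x + \lambda T_2 x$, the forward half of the set equality. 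The converse is obtained by reversing these steps: applying $P_A$ to the hypothesis $P_A x = (1-\lambda)x + \lambda T_2 x$ first produces $\lambda(P_A T_2 x - P_A x) = 0$ and hence $P_A T_2 x = P_A x$, which when substituted into the expanded formula for $T_\lambda x$ collapses to $x$.

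The main obstacle is conceptual rather than computational: one has to notice that the fixed point equation together with the automatic membership of $(1-\lambda)x + \lambda T_2 x$ in $A$ supplies two independent affine expressions for the same vector, from which $P_A x$ and $P_A T_2 x$ can be solved simultaneously. Once this ``apply $P_A$ twice'' idea is in place, every remaining step is bookkeeping of coefficients, with the key numerology being that the reflection weights $1+\lambda$ and $-\lambda$ form an affine combination.
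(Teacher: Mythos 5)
Your argument is correct where the paper's is, and it follows essentially the same route: part (i) is the same algebraic expansion (you merely carry it out element-wise, which also covers set-valued $T_i$), and in part (ii) you use the same two ingredients as the paper --- affineness of $P_A$ together with the observation that $\lambda T_2x+(1-\lambda)x$, being an affine combination of points of $A$ (equivalently, a value of $P_A$), lies in $A$ --- to extract $P_Ax=P_AT_2x$ by ``applying $P_A$ twice'' and then back-substituting. The one genuine difference is that you explicitly prove the reverse inclusion $\{x\mid P_Ax=\lambda T_2x+(1-\lambda)x\}\subset\Fix T_{\lambda}$, which the paper's proof silently omits (it only derives consequences of $x\in\Fix T_{\lambda}$, so strictly it establishes the inclusion $\subset$ for both displayed sets). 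This added completeness, however, exposes an edge case: your step from $\lambda(P_AT_2x-P_Ax)=0$ to $P_AT_2x=P_Ax$ cancels $\lambda$ and therefore fails at $\lambda=0$. This is not an oversight you can repair, because at $\lambda=0$ the claimed equality itself is false: it would read $\Fix(P_AT_2)=A$, whereas taking $A=\mathbb{R}\times\{0\}$ and $T_2=P_B$ with $B=\{(0,1)\}$ gives $\Fix(P_AP_B)=\{(0,0)\}\subsetneq A$. So your proof of the equality is valid exactly for $\lambda\in(0,1]$, with only the forward inclusion surviving at $\lambda=0$; with that caveat recorded, your write-up is complete and slightly stronger than the paper's.
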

\begin{proof}
\eqref{p:Fix_point_oper_1}.
\begin{align*}
(1+\lambda)T_{\lambda}-\lambda\id &=(1+\lambda)\left(T_1\left((1+\lambda)T_2-\lambda\id\right)
-\lambda(T_2-\id)\right)-\lambda\id
\\
&=(1+\lambda)T_1\left((1+\lambda)T_2-\lambda\id\right)-
\lambda\left[(1+\lambda)T_2-\lambda\id\right]
\\
&=\left((1+\lambda)T_1-\lambda\id\right)
\circ\left((1+\lambda)T_2-\lambda\id\right).
\end{align*}

\eqref{p:Fix_point_oper_2}.
\begin{align}
\notag
x = T_{\lambda}x = P_A\left((1+\lambda)T_2x-\lambda x\right)-\lambda(T_2x-x)
\\\label{tag1}
\Leftrightarrow \lambda T_2x+(1-\lambda)x=P_A\left((1+\lambda)T_2x-\lambda x\right).
\end{align}
In particular, $\lambda T_2x+(1-\lambda)x\in A$.
This together with equality \eqref{tag1} and $P_A$ being an affine operator imply
\begin{align}
\notag
P_A\paren{\lambda T_2x+(1-\lambda)x} &= P_A\left((1+\lambda)T_2x-\lambda x\right)
\\ \notag
\Leftrightarrow
\lambda P_AT_2x + (1-\lambda)P_Ax &= (1+\lambda)P_AT_2x-\lambda P_Ax
\\ \label{tag1'}
\Leftrightarrow
P_Ax &= P_AT_2x.
\end{align}
Substituting \eqref{tag1'} into \eqref{tag1} yields
\begin{align*}
\lambda T_2x+(1-\lambda)x &= (1+\lambda)P_AT_2x-\lambda P_Ax
\\
& = (1+\lambda)P_Ax-\lambda P_Ax = P_Ax.
\end{align*}
The proof is complete.
\hfill$\square$
\end{proof}

The next proposition shows that the almost averagedness of $T_{\lambda}$ naturally inherits from that of $T_1$ and $T_2$ via Krasnoselski--Mann relaxations.

\begin{proposition}[almost averagedness of $T_{\lambda}$]\label{alm_aver_T_lamb}
Let $T_i$ $(i=1,2)$ be almost averaged on $U_i\subset \mathbb{E}$ with violation $\varepsilon$ and averaging constant $\alpha$ and define
\[
U = \{x\in U_2 \mid R_{T_2,\lambda}x \subset U_1\}.
\]
Then $T_{\lambda}$ is almost averaged on $U$ with violation $2\varepsilon + (1+\lambda)\varepsilon^2$ and averaging constant
\begin{equation}\label{tilde alpha}
\tilde{\alpha} \equiv \frac{1}{(1+\lambda)\paren{1+\frac{1+\lambda}{2}\paren{\frac{1-\alpha}{\alpha}-\lambda}}}.
\end{equation}
\end{proposition}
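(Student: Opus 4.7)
The plan is to exploit the factorization provided by Proposition \ref{p:Fix_point_oper}\eqref{p:Fix_point_oper_1}, namely $(1+\lambda)T_{\lambda}-\lambda\,\id = R_{T_1,\lambda}\circ R_{T_2,\lambda}$, which exhibits $T_{\lambda}$ as the Krasnoselski--Mann relaxation $T_{\lambda} = \tfrac{1}{1+\lambda}\bigl(R_{T_1,\lambda}\circ R_{T_2,\lambda}\bigr) + \tfrac{\lambda}{1+\lambda}\,\id$. The argument then splits naturally into three steps: (i) pass from almost averagedness of each $T_i$ to almost averagedness of $R_{T_i,\lambda}$ via Lemma \ref{l:averaged of reflector}; (ii) chain the two resulting estimates to obtain almost averagedness of the composition $R_{T_1,\lambda}\circ R_{T_2,\lambda}$; and (iii) pass back to $T_{\lambda}$ using the convex combination identity.

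For step (i), Lemma \ref{l:averaged of reflector} yields, for each $i\in\{1,2\}$, that $R_{T_i,\lambda}$ is almost averaged on $U_i$ with violation $\varepsilon_R\equiv(1+\lambda)\varepsilon$ and averaging constant $\alpha_R\equiv\frac{1}{1+(1+\lambda)(\frac{1-\alpha}{\alpha}-\lambda)}$. For step (ii), fix $x,y\in U$, set $u\equiv R_{T_2,\lambda}x$, $v\equiv R_{T_2,\lambda}y$ (which lie in $U_1$ by definition of $U$), and then $u'\equiv R_{T_1,\lambda}u$, $v'\equiv R_{T_1,\lambda}v$. Chaining the two almost averaged estimates produces
\begin{align*}
\|u'-v'\|^2 \le (1+\varepsilon_R)^2\|x-y\|^2 &- \tfrac{(1+\varepsilon_R)(1-\alpha_R)}{\alpha_R}\|(u-x)-(v-y)\|^2 \\
&- \tfrac{1-\alpha_R}{\alpha_R}\|(u'-u)-(v'-v)\|^2.
\end{align*}
Since $\varepsilon_R\ge 0$, the first negative coefficient dominates the second, so applying the elementary inequality $\|a\|^2+\|b\|^2 \ge \tfrac12\|a+b\|^2$ to $a=(u-x)-(v-y)$ and $b=(u'-u)-(v'-v)$ (whose sum is $(u'-x)-(v'-y)$) collapses the two negative terms into one, producing almost averagedness of $R_{T_1,\lambda}\circ R_{T_2,\lambda}$ on $U$ with violation $(1+\varepsilon_R)^2-1$ and averaging constant $\tfrac{2\alpha_R}{1+\alpha_R}$.

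For step (iii), writing $T_{\lambda}x-T_{\lambda}y = \tfrac{1}{1+\lambda}(u'-v')+\tfrac{\lambda}{1+\lambda}(x-y)$ and applying the convex combination identity $\|\beta a+(1-\beta)b\|^2 = \beta\|a\|^2+(1-\beta)\|b\|^2-\beta(1-\beta)\|a-b\|^2$ with $\beta=\tfrac{1}{1+\lambda}$, together with the observation $(T_{\lambda}x-x)-(T_{\lambda}y-y) = \tfrac{1}{1+\lambda}\bigl((u'-x)-(v'-y)\bigr)$, converts the bound of step (ii) directly into the almost averaged inequality for $T_{\lambda}$. The resulting violation is $\tfrac{(1+\varepsilon_R)^2-1}{1+\lambda} = 2\varepsilon+(1+\lambda)\varepsilon^2$, while routine simplification shows that the averaging constant becomes precisely the $\tilde{\alpha}$ in \eqref{tilde alpha}.

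The main obstacle is arithmetic bookkeeping rather than conceptual difficulty: the crucial lossy step $\|a\|^2+\|b\|^2\ge\tfrac12\|a+b\|^2$ in step (ii) is what produces the factor $\tfrac{2\alpha_R}{1+\alpha_R}$, and one must verify that this factor, once passed through the subsequent Krasnoselski--Mann relaxation by the weight $\tfrac{1}{1+\lambda}$, collapses exactly to the formula \eqref{tilde alpha}. The formulas are rigid enough that any off-by-one in the composition or relaxation step would break the identification, so the calculation must be carried out carefully.
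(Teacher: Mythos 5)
Your proof is correct and follows essentially the same route as the paper: factor $T_{\lambda}$ through Proposition \ref{p:Fix_point_oper}\eqref{p:Fix_point_oper_1}, pass to the $\lambda$-reflectors via Lemma \ref{l:averaged of reflector}, compose, and relax back with weight $\tfrac{1}{1+\lambda}$. The only difference is that you derive the two intermediate ingredients by hand (the two-fold composition estimate, for which the paper cites \cite[Proposition 2.10 (iii)]{LukNguTam16}, and the final Krasnoselski--Mann step, for which the paper reapplies Lemma \ref{l:averaged of reflector}), and your explicit arithmetic reproduces the stated violation $2\varepsilon+(1+\lambda)\varepsilon^2$ and the constant \eqref{tilde alpha} exactly.
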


\begin{proof}
By Lemma \ref{l:averaged of reflector}, the operators $R_{T_i,\lambda} := (1+\lambda)T_i - \lambda\id$ $(i=1,2)$ are almost averaged on $U_i$ with violation $(1+\lambda)\varepsilon$ and averaging constant $\alpha'$ given by \eqref{alpha'}.
Then thanks to \cite[Proposition 2.10 (iii)]{LukNguTam16}, the operator $T\equiv R_{T_1,\lambda}R_{T_2,\lambda}$ is almost averaged on $U$ with violation $(1+\lambda)\paren{2\varepsilon+(1+\lambda)\varepsilon}$ and averaging constant $\frac{2\alpha'}{1+\alpha'}$.
Note that $T_{\lambda}=(1+\lambda)T -\lambda\id$ by Proposition \ref{p:Fix_point_oper}.
We have, again by Lemma \ref{l:averaged of reflector}, that $T_{\lambda}$ is almost averaged on $U$ with violation $2\varepsilon+(1+\lambda)\varepsilon$ and averaging constant $\tilde{\alpha}$ given by \eqref{tilde alpha} as claimed.
\hfill$\square$
\end{proof}

We next discuss convergence of $T_{\lambda}$ based on the abstract results established in \cite{LukNguTam16}.
Our agenda is to verify the assumptions of Theorem \ref{t:Tconv}.
To simplify the exposure in terms of presentation, we have chosen to state the results corresponding to $S=\Fix T_{\lambda}$ and $\Lambda=\mathbb{E}$ in Theorem \ref{t:Tconv}.

\begin{theorem}[convergence of $T_{\lambda}$ with metric subregularity]\label{t:metric subreg convergence}
Let $T_{\lambda}$ be defined at \eqref{RDR_general} with $T_i$ $(i=1,2)$ being almost averaged with violation $\varepsilon$ and averaging constant $\alpha$.
Denote $S_\rho \equiv \Fix T_{\lambda} + \rho\Bbb$ for a nonnegative real $\rho$.
Suppose that there are $\delta>0$ and $\gamma\in (0,1)$ such that, for each $n\in \Nbb$, the mapping $F \equiv T_{\lambda}-\id$ is metrically subregular with gauge $\mu_n$ on $R_n\equiv S_{\gamma^n\delta}\setminus S_{\gamma^{n+1}\delta}$ for $0$, where $\mu_n$ satisfies
	\begin{equation}\label{e:kappa-epsilon}
\inf_{x\in R_n}
\frac{\mu_n\paren{\dist\paren{x,\Fix T_{\lambda}}}}{\dist\paren{x,\Fix T_{\lambda}}}
\ge\kappa_n > \sqrt{\frac{\tilde{\alpha}\varepsilon'}{1-\tilde{\alpha}}},
	\end{equation}
where $\tilde{\alpha}$ is given by \eqref{tilde alpha} and
\begin{align}\label{epsilon'}
\varepsilon'\equiv 2\varepsilon + (1+\lambda)\varepsilon^2.
\end{align}
Then, for any initial point $x_0\in S_{\delta}$,
the iterates $x_{k+1}\in T_{\lambda}x_k$ satisfy
\begin{align}\label{converge to zero}
\dist\paren{x_k,\Fix T_{\lambda}}\to 0,
\end{align}
and
\begin{equation}\label{e:metric subreg linear conv}
\dist\paren{x_{k+1}, \Fix T_{\lambda}} \leq c_n\dist\paren{x_k,\Fix T_{\lambda}} \quad\forall ~x_k\in R_n,
\end{equation}
where $c_n\equiv \sqrt{1+\varepsilon'-\frac{(1-\tilde{\alpha})\kappa_n^2}{\tilde{\alpha}}}<1$.

In particular, if $(\kappa_n)$ is bounded below by $\underline{\kappa}>\sqrt{\frac{\tilde{\alpha}\varepsilon'}{1-\tilde{\alpha}}}$ for all $n$ large enough, then $(x_k)$ is eventually R-linearly convergent to a fixed point with rate at most
$c\equiv \sqrt{1+\varepsilon'-\frac{(1-\tilde{\alpha})\underline{\kappa}^2}{\tilde{\alpha}}}<1$.
\end{theorem}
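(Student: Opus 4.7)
The proof strategy is to apply the abstract convergence template Theorem \ref{t:Tconv} on each shell $R_n$ separately and then chain the local contractions. Preliminarily, Proposition \ref{alm_aver_T_lamb} together with the averagedness hypotheses on $T_1,T_2$ yields that $T_\lambda$ is almost averaged on the whole space $\Ebb$ with violation $\varepsilon'$ of \eqref{epsilon'} and averaging constant $\tilde\alpha$ of \eqref{tilde alpha}, so hypothesis \eqref{t:Tconv a} of Theorem \ref{t:Tconv} is in force on every subset of $\Ebb$.

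Fixing $n \in \Nbb$, I would apply Theorem \ref{t:Tconv} with $S = \Fix T_\lambda$, $\Lambda = \Ebb$, $\Ocal = S_{\gamma^n\delta}$, and $\Vcal = S_{\gamma^{n+1}\delta}$; both are open neighborhoods of $\Fix T_\lambda$ satisfying $\Ocal \setminus \Vcal = R_n$. By Remark \ref{rem_metric_sub}, hypothesis \eqref{t:Tconv b} reduces to metric subregularity of $T_\lambda - \id$ at $0$ on $R_n$, which is exactly what the hypothesized gauge $\mu_n$ and the lower bound \eqref{e:kappa-epsilon} deliver with constant $\kappa_n > \sqrt{\tilde\alpha\varepsilon'/(1-\tilde\alpha)}$. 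Theorem \ref{t:Tconv} then produces a contraction factor $c_n < 1$ satisfying \eqref{e:metric subreg linear conv} on $R_n$.

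For the global statement \eqref{converge to zero}, I would trace the iterates through the shell decomposition. Starting from $x_0 \in S_\delta$, at each step either $x_k \in \Fix T_\lambda$ (in which case we are done) or $x_k$ lies in some $R_n$, where the strict contraction $c_n < 1$ forces $\dist(x_{k+1},\Fix T_\lambda) < \dist(x_k,\Fix T_\lambda)$. Since $c_n^j \gamma^n \delta \to 0$ as $j \to \infty$, only finitely many iterations can remain in $R_n$ before the distance drops below $\gamma^{n+1}\delta$, and the iterate enters a strictly deeper shell $R_m$ with $m > n$. Because the shells exhaust $S_\delta \setminus \Fix T_\lambda$ with $\gamma^n\delta \downarrow 0$, this procedure yields $\dist(x_k,\Fix T_\lambda) \to 0$.

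For the ``in particular'' clause, the uniform lower bound $\kappa_n \ge \underline\kappa$ for all sufficiently large $n$ gives $c_n \le c < 1$ on every sufficiently deep shell, so beyond some index $k_0$ the distance contracts by the uniform factor $c$ at each step. To upgrade distance-decay to convergence of the iterates themselves, I would apply almost averagedness of $T_\lambda$ with any $y \in \Fix T_\lambda$ (so $y^+ = y$) and then minimize over such $y$ to obtain
\begin{equation*}
\norm{x_{k+1}-x_k}^2 \le \tfrac{\tilde\alpha(1+\varepsilon')}{1-\tilde\alpha}\,\dist^2(x_k,\Fix T_\lambda).
\end{equation*}
For $k \ge k_0$ the right side is controlled by $c^{2(k-k_0)}$, so $\norm{x_{k+1}-x_k}$ decays geometrically at rate $c$, the series $\sum_k \norm{x_{k+1}-x_k}$ converges, and $(x_k)$ is Cauchy with a limit $x^\ast$; closedness of $\Fix T_\lambda$ together with $\dist(x_k,\Fix T_\lambda)\to 0$ places $x^\ast \in \Fix T_\lambda$, and the geometric tail gives R-linear convergence at rate at most $c$. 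The main subtlety is the bookkeeping across shells; the strict contraction $c_n<1$ guaranteed by \eqref{e:kappa-epsilon} is precisely what prevents outward bouncing and funnels the iterates monotonically inward.
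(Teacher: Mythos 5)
Your proposal is correct and follows essentially the same route as the paper: almost averagedness of $T_\lambda$ via Proposition \ref{alm_aver_T_lamb} gives hypothesis \eqref{t:Tconv a} of Theorem \ref{t:Tconv}, the gauge condition \eqref{e:kappa-epsilon} together with Remark \ref{rem_metric_sub} gives hypothesis \eqref{t:Tconv b} on each annulus $R_n$ with $\Ocal=S_{\gamma^n\delta}$ and $\Vcal=S_{\gamma^{n+1}\delta}$, and the conclusion follows from that theorem. The only difference is that you spell out the shell-chaining and the Cauchy/R-linear tail argument that the paper compresses into ``a straightforward care of the involving technical constants''; these details are accurate.
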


\begin{proof}
For each $n\in \mathbb{N}$, we verify the assumptions of Theorem \ref{t:Tconv} for $\Ocal=S_{\gamma^n\delta}$ and $\Vcal = S_{\gamma^{n+1}\delta}$.
Since $T_i$ $(i=1,2)$ are almost averaged with violation $\varepsilon$ and averaging constant $\alpha$, Proposition \ref{alm_aver_T_lamb} ensures that $T_{\lambda}$ is almost averaged with violation $\varepsilon'$ given by \eqref{epsilon'} and averaging constant $\tilde{\alpha}$ given by \eqref{tilde alpha}.
In other words, condition \eqref{t:Tconv a} of Theorem \ref{t:Tconv} is satisfied with $\varepsilon=\varepsilon'$ and $\alpha=\tilde{\alpha}$.
On the other hand, the metric subregularity of ${F}$ with gauge $\mu_n$ satisfying \eqref{e:kappa-epsilon} for $0$ on $R_n$ fulfills condition \eqref{t:Tconv b} of Theorem \ref{t:Tconv} with $\kappa=\kappa_n$ in view of Remark \ref{rem_metric_sub}.
Theorem \ref{t:Tconv} then yields the conclusion of Theorem \ref{t:metric subreg convergence} after a straightforward care of the involving technical constants.
\hfill$\square$
\end{proof}

The first inequality in \eqref{e:kappa-epsilon} essentially says that the gauge function $\mu_n$ can be bounded from below by a linear function on the reference interval.

\section{Applications to feasibility}\label{s:application to feasibility}

We consider $T_{\lambda}$ for solving feasibility involving two closed sets $A,B\subset \mathbb{E}$,
\begin{align}
\notag
x^+\in T_{\lambda}x &=P_A\left((1+\lambda)P_Bx-\lambda x\right)-\lambda\left(P_Bx-x\right)
\\\label{RDR}
&=P_AR_{P_B,\lambda}(x)-\lambda\left(P_Bx-x\right).
\end{align}
Note that $T_{\lambda}$ with $\lambda=0$ and $1$ corresponds to the alternating projection $P_AP_B$ and the DR methods $\frac{1}{2}(R_{A}\circ R_{B}+\id)$, respectively.
\bigskip

It is worth recalling that feasibility for $m\ge 2$ sets can be reformulated as feasibility for two constructed sets on the product space $\mathbb{E}^m$ with one of the later sets is a linear subspace, and the regularity properties in terms of both individual sets and collections of sets of the later sets are inherited from those of the former ones \cite{BauBor96,LewLukMal09}.

When $A$ is an affine set, then the projector $P_A$ is affine and $T_{\lambda}$ is the convex combination of the alternating projections and the DR since
\begin{align*}
T_{\lambda}x
&=
P_A\left((1-\lambda)P_Bx+\lambda(2P_Bx-x)\right)-\lambda\left(P_Bx-x\right)
\\
&=(1-\lambda)P_AP_Bx + \lambda\paren{x + P_A(2P_Bx-x) - P_Bx}
\\
&= (1-\lambda)T_0(x) + \lambda T_1(x).
\end{align*}

In this case, we establish convergence results for all convex combinations of the alternating projections and DR.
To our best awareness, this kind of results seems to be new.
\bigskip

Recall that for inconsistent feasibility the DR operator has no fixed points.
We next show that the fixed point set of $T_{\lambda}$ with $\lambda\in [0,1)$ for convex inconsistent feasibility is nonempty.
This result follows the lines of \cite[Lemma 2.1]{Luk08} where the fixed point set of the RAAR algorithm was characterized.

\begin{proposition}[Fixed points of $T_{\lambda}$ for convex inconsistent feasibility]\label{p:fixed point for inconsistent convex} For closed convex sets $A,B$, let $G=\overline{B-A}$, $g=P_G0$, $E=A\cap (B-g)$ and $F=(A+g)\cap B$. Then
$$
\Fix T_{\lambda}=E-\frac{\lambda}{1-\lambda}g\quad \forall t\in [0,1).
$$
\end{proposition}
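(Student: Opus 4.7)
The plan is to unfold the fixed point equation $x=T_\lambda x$ for the operator \eqref{RDR} and combine the variational characterization of $g=P_G 0$ with the normal cone identities that govern best approximation between two closed convex sets. The preparatory facts I shall invoke, all standard in the inconsistent convex case, are: $\norm{g}=\dist(A,B)$; the projection variational inequality $\ip{g}{w}\ge\norm{g}^2$ for every $w\in G$; and the pair of inclusions $g\in \ncone{A}(a)$ and $-g\in \ncone{B}(a+g)$ for every $a\in E$, which follow because $a+g$ realizes $\dist(a,B)=\norm{g}$ and, symmetrically, $a$ realizes $\dist(a+g,A)=\norm{g}$.

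For the inclusion ``$\supset$'', fix $a\in E$, set $\bar x\equiv a-\tfrac{\lambda}{1-\lambda}g$, and verify $T_\lambda\bar x=\bar x$ by three short computations. First, since $\bar x-(a+g)=-\tfrac{1}{1-\lambda}g\in \ncone{B}(a+g)$ and $a+g\in B$, the projection characterization gives $P_B\bar x=a+g$. Next, the partial reflection simplifies to $(1+\lambda)P_B\bar x-\lambda\bar x=a+\tfrac{1}{1-\lambda}g$, and because $\tfrac{1}{1-\lambda}g\in \ncone{A}(a)$ with $a\in A$, this point projects onto $A$ at $a$. Assembling, $T_\lambda\bar x=a-\lambda(P_B\bar x-\bar x)=a-\tfrac{\lambda}{1-\lambda}g=\bar x$, using $P_B\bar x-\bar x=\tfrac{1}{1-\lambda}g$.

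For the opposite inclusion, suppose $x\in\Fix T_\lambda$ and introduce $y\equiv P_B x$, $w\equiv(1+\lambda)y-\lambda x$, $z\equiv P_A w$, and $v\equiv y-x$. The fixed point equation rearranges to $z=x+\lambda v$, whence $y-z=(1-\lambda)v$. A direct computation yields $w-z=v$ and $x-y=-v$, so the projection characterizations deliver $v\in \ncone{A}(z)$ and $-v\in \ncone{B}(y)$. The heart of the argument is to show $y-z=g$. For arbitrary $a'\in A$ and $b'\in B$, decompose $b'-a'=(b'-y)+(y-z)+(z-a')$; the two normal cone inclusions render $\ip{v}{b'-y}\ge 0$ and $\ip{v}{z-a'}\ge 0$, yielding $\ip{v}{b'-a'}\ge(1-\lambda)\norm{v}^2$. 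Cauchy-Schwarz then produces $\norm{b'-a'}\ge(1-\lambda)\norm{v}=\norm{y-z}$, so $\norm{y-z}\le\dist(A,B)=\norm{g}$. Combined with $y-z\in G$, which forces $\norm{y-z}\ge\norm{g}$, the uniqueness of the minimum norm element of $G$ gives $y-z=g$. This identifies $z\in E$ and $x=z-\lambda v=z-\tfrac{\lambda}{1-\lambda}g$.

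The main obstacle is the identification $y-z=g$, which is the only step genuinely using the global minimum-norm property of $g$; everything else reduces to local projection identities and routine algebra. The degenerate case $v=0$ collapses to $x\in A\cap B$, forcing $g=0$ and $x\in E$, so the formula holds trivially; this case can be dispatched separately or folded into the Cauchy-Schwarz step.
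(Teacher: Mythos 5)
Your proposal is correct and follows essentially the same route as the paper: the forward inclusion is verified by the identical normal-cone computations, and the reverse inclusion uses the same two projection variational inequalities combined over arbitrary $a'\in A$, $b'\in B$ to bound $\norm{y-z}$ by $\dist(A,B)$. The only (minor) difference is the last step: you conclude $y-z=g$ from $y-z\in G$ together with uniqueness of the minimum-norm element of the closed convex set $G$, whereas the paper extracts the same conclusion from the equality case of Cauchy--Schwarz along a minimizing sequence $b_n-a_n\to g$; both are valid and of comparable length.
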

\begin{proof}
We first show that $E-\frac{\lambda}{1-\lambda}g\subset \Fix T_{\lambda}$.
Pick any $e\in E$ and denote $f=e+g\in F$ as definitions of $E$ and $F$.
We are checking that 
\[
x\equiv e-\frac{\lambda}{1-\lambda}g\in \Fix T_{\lambda}.
\]
Since $x=f-\frac{1}{1-\lambda}g$ and $-g\in N_B(f)$, we get $P_Bx=f$.

Analogously, since $g\in N_A(e)$ and
\[
(1+\lambda)P_Bx-\lambda x=(1+\lambda)f-\lambda x=e+\frac{1}{1-\lambda}g,
\]
we have $P_A((1+\lambda)P_Bx-\lambda x)=e$.

Hence,
\begin{align*}
x-T_{\lambda}x=\; &x-P_A\left((1+\lambda)P_Bx-\lambda x\right)+\lambda\left(P_Bx-x\right)
\\
=\; &x-e+\lambda\left(f-x\right)=0.
\end{align*}
That is $x\in \Fix T_{\lambda}$.

We next show that
$
\Fix T_{\lambda}\subset E-\frac{\lambda}{1-\lambda}g
$.
Pick any $x\in \Fix T_{\lambda}$. Let $f=P_Bx$ and $y=x-f$.
Thanks to $x\in \Fix T_{\lambda}$ and the definition of $T_{\lambda}$,
\begin{align}\notag
P_A((1+\lambda)P_Bx-\lambda x)=\;&\lambda(P_Bx-x)+x
\\\label{2.9}
=\;&-\lambda y+y+f=f+(1-\lambda)y.
\end{align}
Now, for any $a\in A$, since $A$ is closed and convex, we have
\begin{align*}
0\ge\; & \ip{a-P_A((1+\lambda)P_Bx-\lambda x)}{(1+\lambda)P_Bx-\lambda x-P_A((1+\lambda)P_Bx-\lambda x)}
\\
=\;&\ip{a-(f+(1-\lambda)y)}{(1+\lambda)f-\lambda x-(f+(1-\lambda)y)}
\\
=\;&\ip{a-f-(1-\lambda)y}{-y} = \ip{-a+f}{y}+(1-\lambda)\norm{y}^2.
\end{align*}
On the other hand, for any $b\in B$, since $B$ is closed and convex, we have
$$
\ip{b-f}{y} = \ip{b-f}{x-f}=\ip{b-P_Bx}{x-P_Bx}\le 0.
$$
Combining the last two inequalities yields
$$
\ip{b-a}{y} \le -(1-\lambda)\norm{y}^2\le 0.
$$
Take a sequence $(a_n)$ in $A$ and a sequence $(b_n)$ in $B$ such that $g_n\equiv b_n-a_n\to g$. Then
\begin{equation}\label{2.17}
\ip{g_n}{y}\le -(1-\lambda)\norm{y}^2\le 0\quad \forall n.
\end{equation}
Taking the limit and using the Cauchy--Schwarz inequality yields
$$
\norm{y}\le \frac{1}{1-\lambda}\norm{g}.
$$
Conversely, by \eqref{2.9} with noting that $f\in B$ and $P_A((1+\lambda)P_Bx-\lambda x)\in A$,
$$
\norm{y}=\frac{1}{1-\lambda}\norm{f-P_A((1+\lambda)P_Bx-\lambda x)}\ge \frac{1}{1-\lambda}\norm{g}.
$$
Hence $\norm{y}=\frac{1}{1-\lambda}\norm{g}$, and taking the limit in \eqref{2.17}, which yields $y=-\frac{1}{1-\lambda}g$.
Since $f\in B$ and $f-g=f+(1-\lambda)y=P_A((1+\lambda)P_Bx-\lambda x)\in A$, we have $f-g\in A\cap (B-g)=E$ and, therefore,
$$
x=f+y=f-\frac{1}{1-\lambda}g=f-g-\frac{\lambda}{1-\lambda}g\in E-\frac{\lambda}{1-\lambda}g.
$$
\hfill$\square$
\end{proof}

We next discuss the two key ingredients for convergence of $T_{\lambda}$ when applied to feasibility problems: 1) almost averagedness of $T_{\lambda}$, and 2) metric subregularity of $T_{\lambda}-\id$.
The two properties will be verified under natural assumptions on $(\varepsilon,\delta)$-regularity of the sets and the transversality of the collection of sets, respectively.
\bigskip

The next proposition shows averagedness of $T_{\lambda}$ for feasibility involving $(\varepsilon,\delta)$-regular sets.

\begin{proposition}\label{p: aver_from_reg_for_T_lamb}
Let $A$ and $B$ be $(\varepsilon,\delta)$-regular at $\bar{x}\in A\cap B$ and define the set
\begin{equation}\label{U_definition}
U\equiv\{x\in \mathbb{E} \mid P_Bx\subset \mathbb{B}_{\delta}(\bar{x}) \mbox{ and } P_AR_{P_B,\lambda}x\subset \mathbb{B}_{\delta}(\bar{x})\}.
\end{equation}
Then $T_{\lambda}$ is pointwise almost averaged on $U$ at every point $z \in S$ with violation $\tilde{\varepsilon}$ and averaging constant $\frac{2}{3+\lambda}$, where $S\equiv A\cap B\cap\mathbb{B}_{\delta}(\bar{x})$ and
\begin{equation}\label{tilde_vareps}
\tilde{\varepsilon}\equiv2(2\varepsilon+2\varepsilon^2) + (1+\lambda)(2\varepsilon+2\varepsilon^2)^2.
\end{equation}
That is, for all $x\in U$, $x^+\in T_{\lambda}x$ and $z\in S$,
\begin{equation*}
\norm{x^+-z}^2 \le (1+\tilde{\varepsilon})\norm{x-z}^2 - \frac{1+\lambda}{2}\norm{x-x^+}^2.
\end{equation*}
\end{proposition}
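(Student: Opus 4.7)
The plan is to combine Proposition \ref{p:Fix_point_oper}\eqref{p:Fix_point_oper_1} with two applications of the reflector estimate in Lemma \ref{l:lemma_aver_from_reg}\eqref{l:lemma_aver_from_reg_3}, exploiting that each $z\in S$ is a fixed point of both $R_{P_A,\lambda}$ and $R_{P_B,\lambda}$. Note that a direct application of Proposition \ref{alm_aver_T_lamb} with $T_1=P_A$, $T_2=P_B$ and $\alpha=1/2$ would deliver the correct violation but only the strictly smaller averaging constant $\tfrac{2}{(1+\lambda)(3-\lambda^2)}$; sharper bookkeeping of the friction terms is what produces the claimed $\tfrac{2}{3+\lambda}$.

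Fix $x\in U$, $x^+\in T_\lambda x$ and $z\in S$, and set $y := R_{P_B,\lambda}x$ and $\tilde{x} := R_{P_A,\lambda}y$. By Proposition \ref{p:Fix_point_oper}\eqref{p:Fix_point_oper_1} with $T_1=P_A$, $T_2=P_B$, one has $\tilde{x}=(1+\lambda)x^+-\lambda x$ and hence $\tilde{x}-x=(1+\lambda)(x^+-x)$. The definition of $U$ ensures $P_Bx\subset\mathbb{B}_\delta(\bar{x})$ and $P_Ay\subset\mathbb{B}_\delta(\bar{x})$, so Lemma \ref{l:lemma_aver_from_reg}\eqref{l:lemma_aver_from_reg_3} applied at the fixed points $z\in B\cap\mathbb{B}_\delta(\bar{x})$ and $z\in A\cap\mathbb{B}_\delta(\bar{x})$ yields, with $\varepsilon_0 := 2\varepsilon+2\varepsilon^2$,
\begin{align*}
\norm{y-z}^2 &\le (1+(1+\lambda)\varepsilon_0)\norm{x-z}^2 - \tfrac{1-\lambda}{1+\lambda}\norm{y-x}^2,\\
\norm{\tilde{x}-z}^2 &\le (1+(1+\lambda)\varepsilon_0)\norm{y-z}^2 - \tfrac{1-\lambda}{1+\lambda}\norm{\tilde{x}-y}^2.
\end{align*}
I would then invoke the convex-combination identity $\norm{tu+(1-t)v}^2 = t\norm{u}^2+(1-t)\norm{v}^2 - t(1-t)\norm{u-v}^2$ with $t=\tfrac{1}{1+\lambda}$, $u=\tilde{x}-z$, $v=x-z$ (using $\tfrac{\lambda}{(1+\lambda)^2}\norm{\tilde{x}-x}^2 = \lambda\norm{x^+-x}^2$) to obtain
\begin{equation*}
\norm{x^+-z}^2 = \tfrac{1}{1+\lambda}\norm{\tilde{x}-z}^2 + \tfrac{\lambda}{1+\lambda}\norm{x-z}^2 - \lambda\norm{x^+-x}^2,
\end{equation*}
and chain in the two reflector estimates. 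The coefficient of $\norm{x-z}^2$ then collapses to $\frac{(1+(1+\lambda)\varepsilon_0)^2+\lambda}{1+\lambda} = 1+2\varepsilon_0+(1+\lambda)\varepsilon_0^2 = 1+\tilde{\varepsilon}$ by routine expansion, matching \eqref{tilde_vareps}.

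The delicate step is to extract $-\tfrac{1+\lambda}{2}\norm{x^+-x}^2$ from the three remaining nonpositive quadratic terms
\begin{equation*}
-\tfrac{(1-\lambda)(1+(1+\lambda)\varepsilon_0)}{(1+\lambda)^2}\norm{y-x}^2 -\tfrac{1-\lambda}{(1+\lambda)^2}\norm{\tilde{x}-y}^2 -\lambda\norm{x^+-x}^2.
\end{equation*}
Since $1+(1+\lambda)\varepsilon_0\ge 1$, the first coefficient may be weakened to $-\tfrac{1-\lambda}{(1+\lambda)^2}$; the elementary inequality $\norm{y-x}^2+\norm{\tilde{x}-y}^2 \ge \tfrac{1}{2}\norm{\tilde{x}-x}^2 = \tfrac{(1+\lambda)^2}{2}\norm{x^+-x}^2$ then merges the first two terms into $-\tfrac{1-\lambda}{2}\norm{x^+-x}^2$, and combining with the third produces exactly $-\tfrac{1+\lambda}{2}\norm{x^+-x}^2$, as required. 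The main obstacle is precisely this final arrangement: one must keep the two friction terms symmetric rather than invoke Proposition \ref{alm_aver_T_lamb} off-the-shelf, which would yield a strictly weaker averaging constant than $\tfrac{2}{3+\lambda}$.
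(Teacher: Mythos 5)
Your proof is correct, and it arrives at exactly the displayed inequality; structurally it follows the same decomposition as the paper (write $R_{T_\lambda,\lambda}=R_{P_A,\lambda}\circ R_{P_B,\lambda}$ via Proposition \ref{p:Fix_point_oper}\eqref{p:Fix_point_oper_1} and feed in the reflector estimates of Lemma \ref{l:lemma_aver_from_reg}\eqref{l:lemma_aver_from_reg_3}), but the execution is genuinely different. The paper composes the two almost averaged reflectors by citing the general two-point composition theorem \cite[Proposition 2.10 (iii)]{LukNguTam16} and then passes back to $T_\lambda$ through the equivalence of Lemma \ref{l:averaged of reflector}; you instead chain the two pointwise estimates directly at the fixed point $z$ (where $R_{P_B,\lambda}z=z$ and $R_{P_A,\lambda}z=z$), un-reflect with the convex-combination identity, and merge the friction terms by hand via $\norm{y-x}^2+\norm{\tilde{x}-y}^2\ge\tfrac12\norm{\tilde{x}-x}^2$. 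What your route buys is a self-contained, verifiable computation in which the constants $1+\tilde{\varepsilon}$ and $\tfrac{1+\lambda}{2}$ visibly emerge, with no reliance on the external composition result; what it gives up is the two-point (genuinely "averaged", not merely quasi-averaged) form of the conclusion, since your argument only produces the inequality against fixed points $z\in S$ --- which is, however, all the proposition's display asserts and all that is used downstream in Theorem \ref{t:linear convergence of T_lambda}. Two small points. First, since $P_A$ and $P_B$ may be multivalued, $y$ and $\tilde{x}$ should be introduced as the particular selections $y=(1+\lambda)b-\lambda x$ with $b\in P_Bx$ and $\tilde{x}=(1+\lambda)a-\lambda y$ with $a\in P_Ay$ corresponding to the given $x^+=a-\lambda(b-x)$; the identity $\tilde{x}=(1+\lambda)x^+-\lambda x$ then holds by direct computation. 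Second, your preliminary remark about Proposition \ref{alm_aver_T_lamb} has the comparison backwards: a \emph{smaller} averaging constant is a \emph{stronger} property (the friction coefficient $\tfrac{1-\alpha}{\alpha}$ is larger), so if that proposition applied off-the-shelf with constant $\tfrac{2}{(1+\lambda)(3-\lambda^2)}\le\tfrac{2}{3+\lambda}$ it would immediately imply the claim rather than fall short of it; this misstatement is harmless because your actual argument never uses it.
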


\begin{proof} Let us define
\begin{align*}
U_A \equiv\{y\in \mathbb{E} \mid P_Ay\subset \mathbb{B}_{\delta}(\bar{x})\},\quad
U_B \equiv\{x\in \mathbb{E} \mid P_Bx\subset \mathbb{B}_{\delta}(\bar{x})\}
\end{align*}
and note that $x\in U$ if and only if $x\in U_B$ and $R_{P_B,\lambda}x\subset U_A$.
Thanks to Lemma \ref{l:lemma_aver_from_reg}~\eqref{l:lemma_aver_from_reg_3}, $R_{P_A,\lambda}$ and $R_{P_B,\lambda}$ are
pointwise almost averaged on $U$ at every point $z\in S$ with violation $(1+\lambda)(2\varepsilon+2\varepsilon^2)$ and averaging constant $\frac{1+\lambda}{2}$.
Then due to \cite[Proposition 2.10 (iii)]{LukNguTam16}, the operator $T\equiv R_{P_A,\lambda}R_{P_B,\lambda}$ is
pointwise almost averaged on $U$ at every point $z\in S$ with violation $(1+\lambda)\tilde{\varepsilon}$ and averaging constant $\frac{2(1+\lambda)}{3+\lambda}$, where $\tilde{\varepsilon}$ is given by \eqref{tilde_vareps}.
Note that $T_{\lambda}=(1+\lambda)T -\lambda\id$ by Proposition \ref{p:Fix_point_oper}.
Thanks to Lemma \ref{l:averaged of reflector}, $T_{\lambda}$ is pointwise almost averaged on $U$ at every point $z \in S$ with violation $\tilde{\varepsilon}$ and averaging constant $\frac{2}{3+\lambda}$ as claimed.
\hfill$\square$
\end{proof}

\begin{remark}\label{r:on U}
It follows from Lemma \ref{l:lemma_aver_from_reg}~\eqref{l:averaged of reflector 1} $\&$ \eqref{l:lemma_aver_from_reg_3} that the set $U$ defined by \eqref{U_definition} contains at least the ball $\mathbb{B}_{\delta'}(\bar{x})$, where
\begin{equation*}
\delta' \equiv \frac{\delta}{2(1+\varepsilon)\sqrt{1+(1+\lambda)(2\varepsilon+2\varepsilon^2)}}>0.
\end{equation*}
\end{remark}

We first integrate Proposition \ref{p: aver_from_reg_for_T_lamb} into Theorem \ref{t:metric subreg convergence} to obtain convergence of $T_{\lambda}$ for consistent feasibility involving regular sets.

\begin{corollary}[convergence of $T_{\lambda}$ for feasibility]\label{c:metric subreg convergence for feasibility}
Consider $T_{\lambda}$ for feasibility \eqref{RDR} and suppose that $\Fix T_{\lambda} = A\cap B \neq \emptyset$.
Denote $S_\rho \equiv \Fix T_{\lambda} + \rho\Bbb$ for a nonnegative real $\rho$.
Suppose that there are $\delta>0$, $\varepsilon\ge 0$ and $\gamma\in (0,1)$ such that $A$ and $B$ are $(\varepsilon,\delta')$-regular at avery point $z\in A\cap B$, where
\[
\delta'\equiv2\delta(1+\varepsilon)\sqrt{1+(1+\lambda)(2\varepsilon+2\varepsilon^2)},
\]
and for each $n\in \Nbb$, the mapping $F \equiv T_{\lambda}-\id$ is metrically subregular with gauge $\mu_n$ for $0$ on $R_n\equiv S_{\gamma^n\delta}\setminus S_{\gamma^{n+1}\delta}$, where $\mu_n$ satisfies
	\begin{equation*}
\inf_{x\in R_n}
\frac{\mu_n\paren{\dist\paren{x,A\cap B}}}{\dist\paren{x,A\cap B}}
\ge \kappa_n > \sqrt{\frac{2\tilde{\varepsilon}}{1+\lambda}},
	\end{equation*}
where $\tilde{\varepsilon}$ is given at \eqref{tilde_vareps}.

Then for any initial point $x_0\in S_{\delta}$,
the iterates $x_{k+1}\in T_{\lambda}x_k$ satisfy \eqref{converge to zero} and \eqref{e:metric subreg linear conv}
with $c_n\equiv \sqrt{1+\tilde{\varepsilon} - \frac{(1+\lambda)\kappa_n^2}{2}}<1$.

In particular, if $(\kappa_n)$ is bounded from below by $\underline{\kappa}>\sqrt{\frac{2\tilde{\varepsilon}}{1+\lambda}}$ for all $n$ large enough, then $(x_k)$ is eventually R-linearly convergent to a fixed point with rate at most
$c\equiv \sqrt{1+\tilde{\varepsilon} - \frac{(1+\lambda)\underline{\kappa}^2}{2}}<1$.
\end{corollary}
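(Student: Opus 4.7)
The plan is to apply Theorem \ref{t:metric subreg convergence} (equivalently Theorem \ref{t:Tconv}) in the feasibility setting by using Proposition \ref{p: aver_from_reg_for_T_lamb} directly to supply the almost averagedness of $T_\lambda$, rather than deducing it from almost averagedness of the two constituent operators as in the abstract statement. Once the parameters $(\tilde{\varepsilon},\tilde{\alpha})$ have been identified, the quantitative conclusions are essentially a parameter bookkeeping exercise.

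First I would check that the enlarged regularity radius chosen in the hypothesis,
\[
\delta' = 2\delta(1+\varepsilon)\sqrt{1+(1+\lambda)(2\varepsilon+2\varepsilon^2)},
\]
is calibrated precisely so that the set $U$ produced by Proposition \ref{p: aver_from_reg_for_T_lamb} with regularity radius $\delta'$ contains the ball $\mathbb{B}_\delta(z)$ for each $z\in A\cap B$. This is Remark \ref{r:on U} read in the opposite direction: substituting $\delta'$ into the formula for the enclosed radius there returns $\delta$. Taking the union over all $z\in A\cap B=\Fix T_\lambda$ then gives $S_\delta\subset U$, so Proposition \ref{p: aver_from_reg_for_T_lamb} applies on $S_\delta$ and yields pointwise almost averagedness of $T_\lambda$ at every fixed point with violation $\tilde{\varepsilon}$ from \eqref{tilde_vareps} and averaging constant $\tilde{\alpha}\equiv 2/(3+\lambda)$.

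Next I would translate these parameters into those appearing in Theorem \ref{t:Tconv}. A brief calculation gives $(1-\tilde{\alpha})/\tilde{\alpha}=(1+\lambda)/2$, so the threshold $\sqrt{\tilde{\alpha}\tilde{\varepsilon}/(1-\tilde{\alpha})}$ collapses to $\sqrt{2\tilde{\varepsilon}/(1+\lambda)}$, matching the lower bound imposed on $\kappa_n$ in the corollary; and the one-step rate $\sqrt{1+\tilde{\varepsilon}-(1-\tilde{\alpha})\kappa_n^2/\tilde{\alpha}}$ produced by Theorem \ref{t:Tconv} becomes exactly the claimed $c_n=\sqrt{1+\tilde{\varepsilon}-(1+\lambda)\kappa_n^2/2}$. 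Condition \eqref{t:Tconv b} of Theorem \ref{t:Tconv} with $S=\Fix T_\lambda=A\cap B$ and $\Lambda=\mathbb{E}$ is, by Remark \ref{rem_metric_sub}, precisely metric subregularity of $F=T_\lambda-\id$ for $0$ on $R_n$ with constant $\kappa_n$, which is exactly what \eqref{e:kappa-epsilon} provides through the gauge $\mu_n$.

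Applying Theorem \ref{t:Tconv} separately on each annulus $R_n$ then yields the one-step contraction \eqref{e:metric subreg linear conv}; chaining these estimates across the nested rings $R_0,R_1,\ldots$ gives \eqref{converge to zero}, and the eventual R-linear rate under a uniform lower bound $\underline{\kappa}$ is immediate from the formula for $c_n$. I expect the main technical subtlety to lie not in any of the individual identities but in the chaining: one must verify that iterates started in $S_\delta$ do not leave it as they pass through successive annuli, so that the per-annulus one-step estimates can be concatenated. This is handled by the linear-monotonicity content already inside Theorem \ref{t:Tconv}, but it is the place where a careful write-up would need to track how the shrinking tolerances $\gamma^n\delta$ interact with the nesting $R_n\supset R_{n+1}$.
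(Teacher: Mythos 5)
Your proposal is correct and follows essentially the same route as the paper: verify condition (a) of Theorem \ref{t:Tconv} via Proposition \ref{p: aver_from_reg_for_T_lamb} together with Remark \ref{r:on U} (the choice of $\delta'$ being exactly what makes the averagedness neighborhood contain the relevant $\delta$-ball around each point of $A\cap B$), verify condition (b) via Remark \ref{rem_metric_sub} as in Theorem \ref{t:metric subreg convergence}, and then apply Theorem \ref{t:Tconv} on each annulus $R_n$. Your explicit computation $(1-\tilde{\alpha})/\tilde{\alpha}=(1+\lambda)/2$ with $\tilde{\alpha}=2/(3+\lambda)$ is the bookkeeping the paper leaves implicit, and correctly recovers the stated threshold and rate $c_n$.
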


\begin{proof}
Let any $x\in R_n$, for some $n\in\mathbb{N}$, $x^+ \in T_{\lambda}x$ and $\bar{x}\in P_{A\cap B}x$.
Proposition \ref{p: aver_from_reg_for_T_lamb} and Remark \ref{r:on U} imply that $T_{\lambda}$ is pointwise almost averaged on $\mathbb{B}_{\delta}(\bar{x})$ at every point $z\in A\cap B\cap\mathbb{B}_{\delta}(\bar{x})$ with violation $\tilde{\varepsilon}$ given by \eqref{tilde_vareps} and averaging constant $\frac{2}{3+\lambda}$.
In other words, condition \eqref{t:Tconv a} of Theorem \ref{t:Tconv} is satisfied.
Condition \eqref{t:Tconv b} of Theorem \ref{t:Tconv} is also fulfilled for the same reason as that of Theorem \ref{t:metric subreg convergence}.
The desired conclusion now follows from Theorem \ref{t:Tconv}.
\hfill$\square$
\end{proof}

In practice, the metric subregularity assumption is often more challenging to verify than the averagedness one.
In the concrete example of consistent alternating projections $P_AP_B$, that condition holds true if and only if the collection of sets is subtransversal.
Our main goal in the rest of this study is to verify the metric subregularity of $T_{\lambda}-\id$ under the assumption of transversality.
As a result, if the sets are also sufficiently regular, then local linear convergence of the iterates $x_{k+1}\in T_{\lambda}x_k$ is guaranteed.
\bigskip

We first describe the concept of relative transversality of collections of sets.
In the sequel, we set $\Lambda \equiv \aff(A\cup B)$, the smallest affine set in $\mathbb{E}$ containing both $A$ and $B$.

\begin{assumption}\label{assumptions}
The collection $\{A,B\}$ is transversal at $\bar{x}\in A\cap B$ relative to $\Lambda$ with constant $\bar{\theta}<1$, that is, for any $\theta>\bar{\theta}$, there exists $\delta>0$ such that
        \[
        \ip{u}{v}\ge -\theta\norm{u}\cdot\norm{v},
        \]
 for all $a\in A\cap \mathbb{B}_{\delta}(\bar{x})$, $b\in B\cap \mathbb{B}_{\delta}(\bar{x})$, $u\in N_{A|\Lambda}^{\prox}(a)$ and
 $v\in N_{B|\Lambda}^{\prox}(b)$.
\end{assumption}

Thanks to \cite[Theorem 1]{Kru05} and \cite[Theorem 1]{KruTha13}, Assumption \ref{assumptions} also ensures subtransversality of $\{A,B\}$ at $\bar{x}$ relative to $\Lambda$ with constant at least $\sqrt{\frac{1-\theta}{2}}$ on the neighborhood $\mathbb{B}_{\delta}(\bar{x})$, that is
\begin{align}\label{assumption subtransversal}
\sqrt{\frac{1-\theta}{2}}\dist(x,A\cap B) \le \max\{\dist(x,A),\dist(x,B)\}\quad \forall x\in \Lambda\cap \mathbb{B}_{\delta}(\bar{x}).
\end{align}

The next lemma is at the heart of our subsequent discussion.

\begin{lemma}\label{l:assymptotic} Under Assumption \ref{assumptions}, for any $\theta\in (\bar\theta,1)$, there exists a number $\delta>0$ such that for all $x\in \mathbb{B}_{\delta}(\bar{x})$ and $x^+ \in T_{\lambda}x$,
\begin{equation}\label{assymptotic}
\kappa\dist(x,{A\cap B}) \le \norm{x-x^+},
\end{equation}
where $\kappa$ is defined by
\begin{align}\label{kappa}
\kappa \equiv \frac{(1-\theta)\sqrt{1+\theta}}{\sqrt{2}\max\left\{1,\lambda+\sqrt{1-\theta^2}\right\}}>0.
\end{align}
\end{lemma}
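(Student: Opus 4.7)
The plan is to derive \eqref{assymptotic} in four steps: (i) express $x-x^+$ as the sum of two proximal normal vectors, one to each set; (ii) invoke Assumption \ref{assumptions} to bound $\|x-x^+\|$ from below by $\sqrt{1-\theta^2}\,\dist(x,B)$; (iii) use a triangle inequality to bound $\dist(x,A)$ from above by a multiple of $\|x-x^+\|$; (iv) feed both bounds into the subtransversality inequality \eqref{assumption subtransversal}.

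For $x\in\Bbb_\delta(\xbar)$, I set $b\equiv P_B x$ and $a\equiv P_A((1+\lambda)b-\lambda x)$, and introduce the proximal normals $u\equiv x-b\in\pncone{B}(b)$ (so $\|u\|=\dist(x,B)$) and $\tilde w\equiv(b-a)-\lambda u\in\pncone{A}(a)$. Unwinding the definition \eqref{RDR} of $T_{\lambda}$ gives the two key identities
\begin{equation*}
x-x^+ \;=\; u + \tilde w, \qquad x^+ - a \;=\; \lambda u.
\end{equation*}
For $\delta$ sufficiently small, continuity of $P_A,P_B$ forces $a,b$ into the transversality neighborhood of Assumption \ref{assumptions}, giving $\ip{u}{\tilde w}\ge -\theta\|u\|\|\tilde w\|$. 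Expanding $\|u+\tilde w\|^2$ and completing the square then yields
\begin{equation*}
\|x-x^+\|^2 \;\ge\; \|u\|^2 - 2\theta\|u\|\|\tilde w\| + \|\tilde w\|^2 \;\ge\; (1-\theta^2)\,\dist(x,B)^2.
\end{equation*}

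On the other side, the triangle inequality combined with the identity $\|x^+-a\|=\lambda\|u\|$ gives $\dist(x,A)\le\|x-a\|\le\|x-x^+\|+\lambda\|u\|$, and plugging in the previous bound produces $\dist(x,A)\le\|x-x^+\|(\lambda+\sqrt{1-\theta^2})/\sqrt{1-\theta^2}$. Substituting both estimates into the subtransversality inequality \eqref{assumption subtransversal} and simplifying via $\sqrt{(1-\theta)(1-\theta^2)}=(1-\theta)\sqrt{1+\theta}$ recovers exactly the constant in \eqref{kappa}, namely $\kappa=(1-\theta)\sqrt{1+\theta}/[\sqrt{2}\max\{1,\lambda+\sqrt{1-\theta^2}\}]$.

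The principal technical subtlety I anticipate is the relative-transversality bookkeeping: to apply Assumption \ref{assumptions} one needs $u$ and $\tilde w$ to belong to the $\Lambda$-restricted cones $\rpncone{B}{\Lambda}(b)$ and $\rpncone{A}{\Lambda}(a)$, which holds automatically once $x\in\Lambda$ (since then $a,b\in\Lambda$ forces the relevant difference vectors to lie in the direction space of $\Lambda$), but otherwise requires projecting $x$ onto $\Lambda$ and reducing to that case. Beyond this bookkeeping, the argument is a short chain of triangle inequalities together with a single completion of the square.
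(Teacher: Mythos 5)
Your proposal is correct and follows essentially the same route as the paper: the same decomposition $x-x^+=(x-b)+(y-a)$ into two proximal normals, the same transversality estimate $\|x-x^+\|^2\ge(1-\theta^2)\dist^2(x,B)$, a bound on $\dist(x,A)$ of the same constant, and the same final appeal to the subtransversality inequality \eqref{assumption subtransversal}. The only (harmless) difference is that you obtain the $\dist(x,A)$ estimate directly from the triangle inequality combined with the $\dist(x,B)$ bound, whereas the paper reaches the identical constant via a two-case analysis; your closing remark on the $\Lambda$-restricted normal cones correctly identifies the bookkeeping that the paper itself leaves implicit.
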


\begin{proof} Take any $x\in \mathbb{B}_{\delta}(\bar{x})$, $b\in P_Bx$, $y=(1+\lambda)b-\lambda x$, $a\in P_Ay$ and $x^+=a-\lambda(b-x)\in T_{\lambda}x$.
By choosing a smaller $\delta$ if necessary, we can assume without loss of generality that $a,b\in \mathbb{B}_{\delta}(\bar{x})$. For example, one can choose $\delta/6$ in place of $\delta$.

We first note that $x-b\in N_{B|\Lambda}^{\prox}(b)\; \mbox{ and }\; y-a\in N_{A|\Lambda}^{\prox}(a)$.
Assumption \ref{assumptions} then yields
\begin{align}\label{cosin condition}
\ip{x-b}{y-a}\ge -\theta\norm{x-b}\cdot\norm{y-a}.
\end{align}
By the definition of $T_{\lambda}$, we have
\begin{align}\notag
\norm{x-x^+}^2 &= \norm{x-b+y-a}^2
\\\notag
& = \norm{x-b}^2+\norm{y-a}^2+2\ip{x-b}{y-a}
\\\notag
&\geq \norm{x-b}^2+\norm{y-a}^2-2\theta\norm{x-b}\cdot\norm{y-a}
\\\label{estimate d(x,B)}
&\geq \paren{1-\theta^2}\norm{x-b}^2 = \paren{1-\theta^2}\dist^2(x,B),
\end{align}
where the first inequality follows from \eqref{cosin condition}.

We next consider the two cases regarding $\dist(x,A)$.

\emph{Case 1}. $\dist(x,A)\le \paren{\lambda+\sqrt{1-\theta^2}}\dist(x,B)$. Thanks to \eqref{estimate d(x,B)} we get
\begin{align}\label{estimate d(x,A) case 1}
\norm{x-x^+}^2 \geq \frac{1-\theta^2}{\paren{\lambda+\sqrt{1-\theta^2}}^2}\dist^2(x,A).
\end{align}

\emph{Case 2}. $\dist(x,A)> \paren{\lambda+\sqrt{1-\theta^2}}\dist(x,B)$. By the triangle inequality and the construction of $T_{\lambda}$, we get
\begin{align}\notag
\norm{x-x^+} &\geq \norm{x-a} - \norm{a-x^+} = \norm{x-a}-\lambda\norm{x-b}
 \\\label{estimate d(x,A) case 2}
& \ge \dist(x,A) - \lambda\dist(x,B)
\ge \paren{1-\frac{\lambda}{\lambda+\sqrt{1-\theta^2}}}\dist(x,A).
\end{align}
Since
\[
\frac{1-\theta^2}{\paren{\lambda+\sqrt{1-\theta^2}}^2} = \paren{1-\frac{\lambda}{\lambda+\sqrt{1-\theta^2}}}^2,
\]
we always have from \eqref{estimate d(x,A) case 1} and \eqref{estimate d(x,A) case 2} that
\begin{align}\label{estimate d(x,A)}
\norm{x-x^+}^2 \geq \frac{1-\theta^2}{\paren{\lambda+\sqrt{1-\theta^2}}^2}\dist^2(x,A).
\end{align}
Combining \eqref{estimate d(x,B)}, \eqref{estimate d(x,A)} and \eqref{assumption subtransversal}, we obtain
\begin{align*}
\norm{x-x^+}^2 & \geq \frac{1-\theta^2}{\max\left\{{1,\paren{\lambda+\sqrt{1-\theta^2}}^2}\right\}}\max\left\{\dist^2(x,A),\dist^2(x,B)\right\}
\\
&\geq \frac{(1-\theta^2)(1-\theta)}{2\max\left\{{1,\paren{\lambda+\sqrt{1-\theta^2}}^2}\right\}}\dist^2(x,A \cap B),
\end{align*}
which yields \eqref{assymptotic} as claimed.
\hfill$\square$
\end{proof}

In the special case of $\lambda=1$, Lemma \ref{l:assymptotic} improve \cite[Lemma 3.14]{HesLuk13} and \cite[Lemma 4.2]{Pha16} where the result was proved for DR with an additional assumption on regularity of the sets.
\bigskip

The next result is the final preparation for our linear convergence result.

\begin{lemma}\label{l:Phan convergence criterion}\cite[Proposition 2.11]{Pha16} Let $T:\mathbb{E}\rightrightarrows \mathbb{E}$, $S\subset \mathbb{E}$ be closed and $\bar{x}\in S$.
Suppose that there are $\delta>0$ and $c\in [0,1)$ such that for all $x\in \mathbb{B}_{\delta}(\bar{x})$, $x^+\in Tx$ and $z\in P_Sx$,
\begin{equation}\label{key estimate}
\norm{x^+-z}\le c\norm{x-z}.
\end{equation}
Then all iterates $x_{k+1}\in Tx_k$ converge R-linearly to a point $\tilde{x}\in S\cap \mathbb{B}_{\delta}(\bar{x})$ provided that $x_0$ is sufficiently close to $\bar{x}$. In particular,
\begin{equation*}
\norm{x_k-\tilde{x}} \le \frac{\norm{x_0-\bar{x}}(1+c)}{1-c}\,c^k.
\end{equation*}
\end{lemma}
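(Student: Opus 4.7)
The plan is to run a standard R-linear convergence argument based on controlling the distance to $S$ together with the iterate increments. The single hypothesis \eqref{key estimate} gives, for any $z_k\in P_Sx_k$ (which exists by closedness of $S$), the one-step bound
\[
\dist(x_{k+1},S)\le \norm{x_{k+1}-z_k}\le c\norm{x_k-z_k}=c\dist(x_k,S),
\]
so once we know $x_k\in\mathbb{B}_{\delta}(\bar{x})$ for all $k$, iteration gives $\dist(x_k,S)\le c^k\dist(x_0,S)$. The triangle inequality then produces the crucial increment bound
\[
\norm{x_{k+1}-x_k}\le \norm{x_{k+1}-z_k}+\norm{x_k-z_k}\le (1+c)\dist(x_k,S)\le (1+c)c^k\dist(x_0,S).
\]

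First I would carry out an induction to trap every iterate inside $\mathbb{B}_{\delta}(\bar{x})$. Assuming $x_0,\ldots,x_k\in\mathbb{B}_{\delta}(\bar{x})$, the above increment bound yields
\[
\norm{x_{k+1}-\bar{x}}\le \norm{x_0-\bar{x}}+\sum_{j=0}^{k}(1+c)c^j\dist(x_0,S)
\le \paren{1+\frac{1+c}{1-c}}\norm{x_0-\bar{x}}
=\frac{2}{1-c}\norm{x_0-\bar{x}},
\]
using $\dist(x_0,S)\le\norm{x_0-\bar{x}}$ since $\bar{x}\in S$. Choosing the initial tolerance $\norm{x_0-\bar{x}}<\frac{(1-c)\delta}{2}$ keeps $x_{k+1}\in\mathbb{B}_{\delta}(\bar{x})$, closing the induction and justifying applying \eqref{key estimate} at every step.

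Next I would extract the limit and the rate. For $m>k$, telescoping and summing the geometric tail gives
\[
\norm{x_m-x_k}\le \sum_{j=k}^{m-1}(1+c)c^j\dist(x_0,S)\le \frac{(1+c)c^k}{1-c}\dist(x_0,S),
\]
so $(x_k)$ is Cauchy and converges to some $\tilde{x}\in\mathbb{E}$. Since $\dist(x_k,S)\to 0$ and $S$ is closed, $\tilde{x}\in S$; the uniform bound $\norm{x_k-\bar{x}}\le\frac{2}{1-c}\norm{x_0-\bar{x}}<\delta$ passes to the limit to place $\tilde{x}\in S\cap\mathbb{B}_{\delta}(\bar{x})$. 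Letting $m\to\infty$ in the Cauchy estimate and using $\dist(x_0,S)\le\norm{x_0-\bar{x}}$ yields the announced R-linear rate
\[
\norm{x_k-\tilde{x}}\le \frac{(1+c)\norm{x_0-\bar{x}}}{1-c}\,c^k.
\]

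The only genuinely delicate point is the self-maintaining nature of the trapping region: the hypothesis \eqref{key estimate} is available only while $x_k\in\mathbb{B}_{\delta}(\bar{x})$, yet the very bound used to show that $x_{k+1}$ remains in this ball is itself derived from \eqref{key estimate}. Choosing the initial neighborhood of radius $\frac{(1-c)\delta}{2}$ (rather than the naive choice $\delta$) is what breaks this circularity, and all subsequent estimates then fall out of the contraction of distances to $S$ combined with the elementary geometric-series bookkeeping.
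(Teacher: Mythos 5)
Your argument is correct and is exactly the standard proof of this result (the paper itself states the lemma by citation to \cite[Proposition 2.11]{Pha16} without reproducing a proof): the one-step contraction of $\dist(\cdot,S)$, the increment bound $\norm{x_{k+1}-x_k}\le(1+c)\dist(x_k,S)$, the induction with the shrunken initial radius $\tfrac{(1-c)\delta}{2}$ to keep iterates in $\mathbb{B}_{\delta}(\bar{x})$, and the geometric-series Cauchy estimate are precisely the ingredients of the cited proof, and your constants match the stated bound. No gaps.
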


We are now ready to prove local linear convergence of $T_{\lambda}$ which generalizes the corresponding results established in \cite{HesLuk13,Pha16} for DR.

\begin{theorem}[linear convergence of $T_{\lambda}$]\label{t:linear convergence of T_lambda} In addition to Assumption \ref{assumptions}, suppose that $A$ and $B$ are $(\varepsilon,\delta)$-regular at $\bar{x}$ with $\tilde{\varepsilon} < \frac{(1+\lambda)\kappa^2}{2}$, where $\tilde{\varepsilon}$ and $\kappa$ are given by \eqref{tilde_vareps} and \eqref{kappa}, respectively.
Then the iteration $x_{k+1} \in T_{\lambda}x_k$ converges R-linearly to a point in $A\cap B$ provided that $x_0$ is sufficiently close to $\bar{x}$.
\end{theorem}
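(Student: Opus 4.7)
The strategy is to combine the three key ingredients that have already been prepared: the pointwise almost averagedness of $T_\lambda$ relative to $S = A\cap B$ (Proposition \ref{p: aver_from_reg_for_T_lamb}), the one-step metric subregularity estimate $\kappa\dist(x,A\cap B)\le \|x-x^+\|$ (Lemma \ref{l:assymptotic}), and the Phan-type linear convergence criterion (Lemma \ref{l:Phan convergence criterion}). The quantitative condition $\tilde{\varepsilon}<\tfrac{(1+\lambda)\kappa^2}{2}$ is exactly what is needed to convert the almost averaged bound into a genuine contraction after absorbing the cross term via the asymptotic regularity estimate.

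First I would fix a radius $r>0$ sufficiently small relative to $\delta$, $\varepsilon$ and $\lambda$ so that three compatibility requirements hold simultaneously for every $x\in\Bbb_{r}(\xbar)$: (a) $x$ lies in the set $U$ from \eqref{U_definition}, which by Remark \ref{r:on U} happens provided $r\le \delta'$; (b) Lemma \ref{l:assymptotic} applies at $x$, providing some $b\in P_B x$, $a\in P_A R_{P_B,\lambda}x$ with $a,b\in\Bbb_\delta(\xbar)$, and the estimate $\kappa\dist(x,A\cap B)\le \|x-x^+\|$ for every $x^+\in T_\lambda x$; and (c) any projection $z\in P_{A\cap B}x$ lies in $S=A\cap B\cap\Bbb_\delta(\xbar)$, which is automatic once $r$ is small since $\dist(x,A\cap B)\le \|x-\xbar\|\le r$, hence $\|z-\xbar\|\le 2r$. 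Shrinking $r$ is harmless because all the constants $\tilde{\varepsilon}$, $\kappa$, and the averaging constant $\tfrac{2}{3+\lambda}$ are independent of $r$.

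The core computation is then a single application of Proposition \ref{p: aver_from_reg_for_T_lamb} at the point $z\in P_{A\cap B}x\subset S$, which yields
\begin{equation*}
\|x^+-z\|^2 \le (1+\tilde{\varepsilon})\|x-z\|^2 - \frac{1+\lambda}{2}\|x-x^+\|^2.
\end{equation*}
Since $\|x-z\|=\dist(x,A\cap B)$, Lemma \ref{l:assymptotic} gives $\|x-x^+\|^2\ge \kappa^2\|x-z\|^2$, and substituting produces
\begin{equation*}
\|x^+-z\|^2 \le \Bigl(1+\tilde{\varepsilon}-\tfrac{(1+\lambda)\kappa^2}{2}\Bigr)\|x-z\|^2 = c^2\|x-z\|^2,
\end{equation*}
with $c\equiv\sqrt{1+\tilde{\varepsilon}-\tfrac{(1+\lambda)\kappa^2}{2}}\in[0,1)$ by hypothesis. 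Thus \eqref{key estimate} in Lemma \ref{l:Phan convergence criterion} holds on $\Bbb_{r}(\xbar)$ with constant $c$, and this lemma applied to $T=T_\lambda$ and $S=A\cap B$ delivers R-linear convergence of $(x_k)$ to some $\tilde{x}\in A\cap B\cap \Bbb_{r}(\xbar)$, whenever $x_0$ is close enough to $\xbar$.

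The main obstacle is the interplay between the two neighborhoods: Proposition \ref{p: aver_from_reg_for_T_lamb} requires $z$ to be in $S$ rather than an arbitrary element of $A\cap B$, and Lemma \ref{l:assymptotic} requires the intermediate points $a,b$ (not just $x$) to lie in $\Bbb_\delta(\xbar)$. Both are handled by shrinking the initial radius $r$, but the shrinking has to be done before the contraction constant $c$ is fixed, otherwise an iterate could in principle leave the region where the per-step bound is valid. The cleanest way to close this loop is precisely Lemma \ref{l:Phan convergence criterion}: once \eqref{key estimate} is verified on a single ball $\Bbb_{r}(\xbar)$, its proof propagates the iterates forward while keeping them in a slightly larger ball, bypassing the need for a separate induction argument on the regularity neighborhoods.
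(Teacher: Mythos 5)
Your proposal is correct and follows essentially the same route as the paper's own proof: it combines the almost averagedness from Proposition \ref{p: aver_from_reg_for_T_lamb} (via Remark \ref{r:on U}) with the estimate of Lemma \ref{l:assymptotic}, substitutes one into the other to get the contraction factor $\sqrt{1+\tilde{\varepsilon}-\tfrac{(1+\lambda)\kappa^2}{2}}<1$, and closes with Lemma \ref{l:Phan convergence criterion}. The neighborhood bookkeeping you describe (shrinking the radius so that $x\in U$, the intermediate points stay in $\Bbb_\delta(\bar{x})$, and $z\in P_{A\cap B}x$ lands in $S$) matches the paper's choice of $\delta'=\min\{\delta_1,\delta_2\}$ and its subsequent shrinking step.
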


\begin{proof}
Assumption \ref{assumptions} yields the existence of $\delta_1>0$ such that Lemma \ref{l:assymptotic} holds true.
In view of Proposition \ref{p: aver_from_reg_for_T_lamb} and Remark \ref{r:on U}, one can find a number $\delta_2>0$ such that $T_{\lambda}$ is pointwise almost averaged on $\mathbb{B}_{\delta_2}(\bar{x})$ at every point $z\in A\cap B\cap\mathbb{B}_{\delta_2}(\bar{x})$ with violation $\tilde{\varepsilon}$ given by \eqref{tilde_vareps} and averaging constant $\frac{2}{3+\lambda}$.
Let $\delta'\equiv\min\{\delta_1,\delta_2\}>0$.

Now we consider any $x\in \mathbb{B}_{\delta'}(\bar{x})$, $x^+\in T_{\lambda}x$ and $z\in P_{A\cap B}x$.
Again, by choosing a smaller $\delta$ if necessary, we can assume without loss of generality that $z\in \mathbb{B}_{\delta'}(\bar{x})$.
Proposition \ref{p: aver_from_reg_for_T_lamb} and Lemma \ref{l:assymptotic} then  respectively yield
\begin{align}\label{ingredient 1}
\norm{x^+-z}^2 &\le (1+\tilde{\varepsilon})\norm{x-z}^2 - \frac{1+\lambda}{2}\norm{x-x^+}^2,
\\\label{ingredient 2}
\norm{x-x^+}^2 &\ge \kappa^2\dist^2(x,{A\cap B}) = \kappa^2\norm{x-z}^2,
\end{align}
where $\kappa$ is given by \eqref{kappa}.

Substituting \eqref{ingredient 2} into \eqref{ingredient 1}, we get
\begin{align}\label{ingredient 3}
\norm{x^+-z}^2 &\le \paren{1+\tilde{\varepsilon}-\frac{(1+\lambda)\kappa^2}{2}}\norm{x-z}^2,
\end{align}
which yields condition \eqref{key estimate} of Lemma \ref{l:Phan convergence criterion} and the desired conclusion now follows from aforementioned lemma.
\hfill$\square$
\end{proof}

\section{Numerical experiments}\label{s:numerical simulation}

Our goal in this section is to demonstrate a promising performance of $T_{\lambda}$ in comparison with the RAAR algorithm for the example of sparse feasibility problem.
We first recall the \emph{sparse optimization problem} of
\begin{equation}\label{sparse optimization problem}
\min_{x\in \mathbb{R}^n} \norm{x}_0 \quad \mbox{ subject to }\quad Mx = b,
\end{equation}
where $M\in \mathbb{R}^{m\times n}$ $(m<n)$ is a full rank matrix, $b$ is a vector in $\mathbb{R}^m$, and $\norm{x}_0$ is the number of nonzero entries of the vector $x$.
The sparse optimization problem with complex variable is defined analogously by replacing $\mathbb{R}$ by $\mathbb{C}$ everywhere in the above model.

Many strategies for solving \eqref{sparse optimization problem} have been studied.
We refer the reader to the famous paper by Cand{\`e}s and Tao \cite{CanTao05} for solving this problem by using convex relaxations.
On the other hand, assuming to have a good guess on the sparsity of the solutions to \eqref{sparse optimization problem}, one can tackle this problem by solving the \emph{sparse feasibility problem}  \cite{HesLukNeu14} of finding
\begin{equation}\label{sparse feasibility problem}
\bar{x} \in A_s \cap B,
\end{equation}
where $A_s \equiv \{x\in \mathbb{R}^n\mid \norm{x}_0\le s\}$ and $B \equiv \{x\in \mathbb{R}^n\mid Mx = b\}$.

It is worth mentioning that the initial guess $s$ of the true sparsity is not numerically sensitive with respect to various projection methods, that is, for a relatively wide range of values of $s$ above the true sparsity, projection algorithms perform very much in the same nature.
Note also that the approach via sparse feasibility does not require convex relaxations of \eqref{sparse optimization problem} and thus can avoid the likely expensive increase of dimensionality.

We run $T_{\lambda}$ and RAAR to solve \eqref{sparse feasibility problem} and compare their numerical performances.
By taking $s$ smaller than the true sparsity, we can also compare their performances for inconsistent feasibility.

Since $B$ is affine, there is the closed algebraic form for the projector $P_B$,
\[
P_Bx = x - M^{\dagger}(Mx-b)\quad \forall x\in \mathbb{R}^n,
\]
where $M^{\dagger}:= M^T(MM^T)^{-1}$ is the \emph{Moore--Penrose inverse} of $M$.
We have denoted $M^T$ the transpose matrix of $M$ and take into account of full rank of $M$.
There is also a closed form for $P_{A_s}$ \cite{BauLukPhaWan14}.
For each $x\in \mathbb{R}^n$, let us denote $\mathcal{I}_x$ the set of all $s$-tubles of indices of $s$ largest in absolute value entries  of $x$.
The set $\mathcal{I}_x$ can contains multiple $s$-tubles since $A_s$ is intrinsically nonconvex.
The projector $P_{A_s}$ can be described by
\[
P_{A_s}x = \left\{z\in \mathbb{R}^n \mid \exists\; \mathcal{I}\in \mathcal{I}_x\; \mbox{ such that }
\; z(k)=
\begin{cases}
x(k) & \mbox{if } k \in \mathcal{I},
\\
0  & \mbox{else}
\end{cases}
\right\}.
\]
For convenience, we recall the two algorithms in this case
\begin{align*}
RAAR_\beta &= \beta\paren{P_{A_s}(2P_B-\id)} + (1-2\beta)P_B + \beta\id,
\\
T_{\lambda} &= P_{A_s}\paren{(1+\lambda)P_B-\lambda\id} - \lambda(P_B-\id).
\end{align*}

We now set up a toy example as in \cite{CanTao05,HesLukNeu14} which involves an unknown true object $\bar{x} \in \mathbb{R}^{256^2}$ with $\norm{\bar{x}}_0=328$ (the sparsity rate is $.005$).
Let $b$ be $1/8$ of the measurements of $F(\bar{x})$, the \emph{Fourier transform} of $\bar{x}$, with the \emph{sample indices} denoted by $\mathcal{J}$.
The \emph{Poisson noise} was added when calculating the measurements of $b$.
Note that since $\bar{x}$ is real, $F(\bar{x})$ is \emph{conjugate symmetric}, we indeed have nearly a double number of measurements.
In this setting, we have
\[
B=\{x\in \mathbb{C}^{256^2} : F(x)(k) = b(k), \; \forall k\in \mathcal{J}\},
\]
and the two prox operators, respectively, take the forms
\begin{align*}
P_{A_s}x &= \left\{z\in \mathbb{R}^n \mid \exists\; \mathcal{I}\in \mathcal{I}_x\; \mbox{ such that }
\; z(k)=
\begin{cases}
\Re\paren{x(k)} & \mbox{if } k \in \mathcal{I},
\\
0  & \mbox{else}
\end{cases}
\right\},
\\
P_Bx &= F^{-1}(\hat{x}),\; \mbox{ where }\; \hat{x}(k) =
\begin{cases}
b(k) & \mbox{if } k \in \mathcal{J},
\\
F(x)(k)  & \mbox{else,}
\end{cases}
\end{align*}
where $\Re(x(k))$ denotes the real part of the complex number $x(k)$, and $F^{-1}$ is the \emph{inverse Fourier transform}.

The initial point was chosen randomly, and a warm up procedure with around ten DR iterates was made before running the two algorithms.
The stopping criterion $\norm{x-x^+} < 10^{-10}$ was used.
We have used the \emph{Matlab ProxToolbox} \cite{Luk_PTB17} to run this experiment.
The parameters were chosen in such a way that the performance is seemingly the best for each algorithm.
We chose $\beta=.65$ for RAAR and $\lambda=.45$ for $T_{\lambda}$ in the consistent case corresponding to $s=340$,
and $\beta=.6$ for RAAR and $\lambda=.4$ for $T_{\lambda}$ in the inconsistent case corresponding to $s=310$.

The \emph{change} of distances between two consecutive iterates is of interest.
When linear convergence appears to be the case, it can yield useful information of the convergence rate.
Under the assumption that the iterates will remain in the convergence area, one can obtain error bounds for the distance from the iterate to a solution.
We also pay attention to the iterate \emph{gaps} that in a sense measure the infeasibility at the iterates.
If we think feasibility as the problem of minimizing the function that is the sum of the square of the distance functions to the sets, then iterate gaps are the values of that function evaluated at the iterates.
For the two algorithms under consideration, the iterates are themselves not informative but their shadows, by which we mean the projections of the iterates on one of the sets.
Hence, the iterate gaps are calculated for the iterate shadows instead of the iterates themselves.

Figure \ref{F:comparison for sparse feasibility} summarizes the performances of the two algorithms for both consistent and inconsistent sparse feasibility.
We first emphasize that the algorithms appear to be convergent in both cases of feasibility.
For the consistent case, $T_{\lambda}$ appears to perform better than RAAR in both the iterate changes and gaps.
Also, the CPU time for $T_{\lambda}$ is around $10\%$ less than that for RAAR.
For the inconsistent case, we have a similar observation except that the iterate gaps for RAAR are slightly better (smaller) than those for $T_{\lambda}$. Extensive numerical experiments in imaging illustrating the empirical performance of $T_{\lambda}$ will be the tasks for future work.

\begin{figure}
\begin{center}
\includegraphics[scale=1]{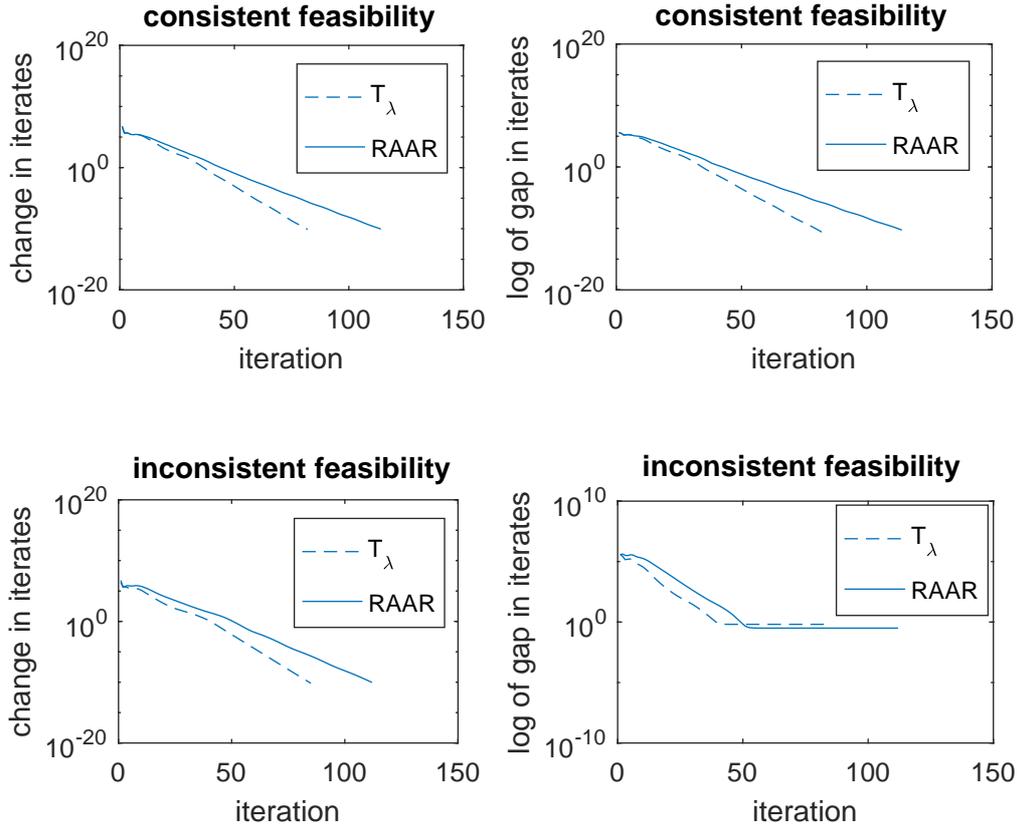}
\end{center}
\caption{Performances of RAAR and $T_{\lambda}$ for sparse feasibility problem: iterate changes in consistent case (top-left), iterate gaps in consistent case (top-right), iterate changes in inconsistent case (bottom-left) and iterate gaps in consistent case (bottom-right).}
\label{F:comparison for sparse feasibility}
\end{figure}

\begin{acknowledgements}
The author would like to thank Prof. Dr. Russell Luke and Prof. Dr. Alexander Kruger for their encouragement and valuable suggestions during the preparation of this work.
\end{acknowledgements}

\end{document}